\newtheorem{thm}{Theorem}{\bf }{\it }
{\bf }{\it }
{\bf }{\it }
{\bf }{\it }
\newtheorem{conj}[thm]{Conjecture}{\bf }{\it }
\newtheorem{quest}[thm]{Question}{\bf }{\it }
{\bf }{\it }
{\bf }{\it }
{\bf }{\it }
\newtheorem{lem}[thm]{Lemma}{\bf }{\it }
{\bf }{\it }
\newtheorem{fct}[thm]{Fact}{\bf }{\it }
\theoremstyle{definition}
\newtheorem{defn}[thm]{Definition}{\bf }{\rm }
\newtheorem{rem}[thm]{Remark}{\bf }{\rm }
{\bf }{\rm }
\newtheoremstyle{TheoremNum}%
{}{}              
{\it}                      
{}                              
{\bf}                     
{.}                             
{ }                             
{\thmname{#1}\thmnote{ \bfseries #3} (restated)}
\theoremstyle{TheoremNum}
\newtheorem{thmn}{Theorem}
\def\wa{\widehat{a}}
\def\gcd{\mathrm{gcd}}
\def\rad{\mathrm{rad}}
\def\Z{\mathbb{Z}}
\def\mod#1{{\;(\textrm{mod}\;#1)}}
\def\br#1{\textcolor{blue}{#1}}
\def\rr#1{\textcolor{red}{#1}}
\def\br#1{#1}
\def\rr#1{#1}
\begin{document}
\title[Improved lower bounds for strong $n$-conjectures]{Improved lower bounds for\\strong $n$-conjectures}

\author{Rupert H\"olzl}
\address{Rupert~H\"olzl and Sören~Kleine, Fakult\"at f\"ur Informatik,
	Universit\"at der Bundeswehr M\"unchen, 
	Neubiberg, Germany}
\email{r@hoelzl.fr}
\email{soeren.kleine@unibw.de}

\author{Sören Kleine}
\address{Frank~Stephan, Department of Mathematics \& School of Computing,
	National University of Singapore, Singapore 119076,
	Republic of Singapore}
\email{fstephan@comp.nus.edu.sg}
\thanks{F.~Stephan's research was supported by the Singapore Ministry of Education AcRF Tier 2 grant MOE-000538-00 and AcRF Tier 1 grants A-0008454-00-00 and A-0008494-00-00.}

\author{Frank Stephan}

\begin{abstract}
The well-known $abc$-conjecture concerns triples $(a,b,c)$ of non-zero integers that 
are coprime and satisfy ${a+b+c=0}$. The strong $n$-conjecture is a generalisation to $n$ summands where integer solutions of the equation ${a_1 + \ldots + a_n = 0}$ are considered such that the $a_i$ are pairwise coprime and satisfy a certain subsum condition. Ramaekers studied a variant of this conjecture with a slightly different set of conditions. He conjectured that in this setting the limit superior of the so-called qualities of the admissible solutions equals~$1$ for any~$n$.

In this article, we follow results of Konyagin and Browkin. We restrict to a smaller, and thus more demanding, set of solutions, and improve the known lower bounds on the limit superior: for ${n \geq 6}$ we achieve a lower bound of $\nicefrac54$; for odd $n \geq 5$ we even achieve $\nicefrac53$. In particular, Ramaekers's conjecture is false for every ${n \ge 5}$.
\end{abstract}

\maketitle

\section{Introduction}

\noindent
The $abc$-conjecture \cite{Mas85,Oes88,Wal15} is a well-known open
problem in mathematics. It postulates that there is no constant
$q > 1$ such that there exist infinitely many triples $(a,b,c)$ of coprime
and nonzero integers with $a+b+c=0$ and such that the ``quality'' of $(a,b,c)$ exceeds $q$.

More precisely, the \emph{radical} $\rad(n)$ of a non-zero integer $n$ is defined as the largest square-free positive divisor
		of $n$. Now let $(a,b,c) \in \Z^3$ be such that ${a,b,c \ne 0}$. Then the \emph{quality} of $(a,b,c)$ is defined as 
		\[ q(a,b,c) = \frac{\log(\max\{|a|,|b|,|c|\})}{\log(\rad(a \cdot b \cdot c))}. \] 
For example, given the
triple ${(8192,-8181,-11) = (2^{13},-3^4 \cdot 101, -11)}$, its entries
are pairwise coprime, their largest square-free positive divisor is
$6666 = 2 \cdot 3 \cdot 11 \cdot 101$,
and its quality is $\log(8192)/\log(6666) \approx 1.0234$, seemingly supporting the claim of the $abc$-conjecture.

The conjecture itself is rather well studied
but still unresolved. However, on the way towards partial solutions, various
variants of the original problem were formulated and conjectures about the achievable qualities in these cases were made. While
Vojta~\cite{Voj87,Voj98} has studied a very general
statement that implies the $abc$\nobreakdash-conjecture,
a more immediate generalisation is the
$n$\nobreakdash-conjecture first studied by
Browkin and Brzezi\'nski~\cite{BB94}. 

The topic of this article is not this $n$-conjecture but two variants respectively introduced by Browkin~\cite{Bro00}, building on work of Konyagin, and Ramaekers~\cite{Ram09}; both used the term ``strong $n$-conjectures'' for their versions. Before we can state these conjectures, we first need to generalize the above definition of quality from triples to $n$-tuples.
\begin{defn}\label{quality}
For ${a = (a_1,\ldots,a_n) \in \Z^n}$, \rr{with $a_i\neq 0$ for $1 \leq i \leq n$,} we write 
\[q(a)  = 
\frac{\log(\max\{|a_1|,\ldots,|a_n|\})}{\log\,\rad(a_1 
\cdot \ldots \cdot a_n)}. \]
Then for a sequence $A = \{a^{(1)}, a^{(2)},\ldots\}  \subseteq \Z^n$ \rr{of $n$-tuples 
 as above,} let the {\em quality}
of $A$ be defined as
$$
   Q_A = \limsup_{k \rightarrow \infty} q(a^{(k)}). 
$$
\end{defn}

\noindent
Different strong $n$-conjectures concern the qualities of different 
sets $A$ of $n$-tuples of integers; it is not hard to see that $q$ and therefore $Q_A$ cannot take values lower than~$1$ for any~$A$.

\medskip
\noindent
Now we can state the strong $n$-conjectures mentioned above. We first recall the
$n$-conjecture and how it relates to the $abc$-conjecture.

\begin{conj}[$n$-conjecture; Browkin and Brzezi\'nski~\cite{BB94}] \label{conj:bb} $\,$ \\ 
	Let ${n \geq 3}$ and let ${A(n) \subseteq \Z^n}$ be the set of $n$-tuples 
	$(a_1,\ldots,a_n)$ such that \begin{enumerate}[(i)] 
		\item $a_1+\ldots+a_n=0$, 
		\item there are no ${b_1,\ldots,b_n \in \{0,1\}}$ and $i,j$ with 
		      ${1\leq i,j\leq n}$ such that ${b_i=0}$ and ${b_j=1}$ and 
		      ${\sum_{k=1}^n b_k \cdot a_k = 0}$, 
		\item $\gcd(a_1,\ldots,a_n) = 1$. 
	\end{enumerate} 
	Then $Q_{A(n)}=2n-5$ for every $n$.  
\end{conj}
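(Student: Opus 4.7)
The statement asserts both a lower and an upper bound on $Q_{A(n)}$. Since the case $n = 3$ reduces the upper bound to the $abc$-conjecture itself (with $2n-5 = 1$), the plan cannot hope to be unconditional in that direction; it splits naturally into an explicit construction for the lower bound and a conditional Diophantine-geometric argument for the upper bound.

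For the lower bound $Q_{A(n)} \geq 2n-5$, the plan is to produce, for each $n$, an infinite sequence $\{a^{(k)}\} \subseteq A(n)$ with $q(a^{(k)}) \to 2n-5$. The classical template is an iterated construction: start from a highly powerful integer $m$ of small radical (e.g.\ $m = 2^N$ for suitably chosen $N$, so that $m \pm 1$ is divisible to high order by a small prime such as $3$), and combine it with polynomial identities of the form $\sum_{i=1}^{n} c_i \, m^{e_i} = 0$ whose coefficients $c_i$ factor through few primes. Heuristically, each additional summand gains a factor of roughly $m^2$ in the numerator of $q$ without enlarging $\rad$, which is the source of the exponent $2n-5$. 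The technical work is to verify condition~(ii): among the exponentially many subsums of length between $1$ and $n-1$, one must rule out any coincidental vanishing, typically by engineering a single $p$-adic valuation or divisibility obstruction into the construction.

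For the upper bound $Q_{A(n)} \leq 2n-5$, the natural framework is Vojta's higher-dimensional conjecture applied to integral points on the projective hyperplane $H : \sum_i x_i = 0$ in $\mathbb{P}^{n-1}$, with boundary divisor the union of the $n$ coordinate hyperplanes restricted to $H$. Condition~(ii) cuts out precisely the union of proper linear sub-strata of $H$, which serves as the exceptional set that Vojta's inequality requires one to excise. A discrepancy computation between the canonical divisor of $H \cong \mathbb{P}^{n-2}$ and the chosen boundary divisor should produce the constant $2n-5$, yielding the desired height inequality after converting from heights back to the quality $q$ of Definition~\ref{quality}.

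The main obstacle is unavoidable: the upper bound contains the $abc$-conjecture as its $n = 3$ instance, so no unconditional proof is currently in reach, and for $n \geq 4$ the combinatorial bookkeeping of subsum strata in the Vojta framework adds a further layer of technicality on top. In practice, a realistic attempt would concentrate on making the lower-bound construction fully explicit, carefully enforcing condition~(ii), and on sharpening the conditional reduction of the upper bound to Vojta's conjecture rather than aiming at an unconditional proof.
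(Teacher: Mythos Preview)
The statement is labelled as a \emph{conjecture} in the paper, and the paper does not prove it. You correctly identify that the upper bound $Q_{A(n)}\le 2n-5$ specialises at $n=3$ to the $abc$-conjecture and is therefore currently out of reach; the paper says exactly this and makes no attempt at the upper bound. Your proposed reduction of the upper bound to Vojta's higher-dimensional conjecture is plausible in outline but is not something the paper discusses, so there is nothing to compare it against.

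Where a comparison is possible is the lower bound $Q_{A(n)}\ge 2n-5$, which \emph{is} a theorem (of Browkin and Brzezi\'nski) and whose proof the paper sketches in Remark~\ref{rem:halb}. That construction is concrete and quite different from what you describe: one rewrites the geometric-series identity as
\[
y^{\,n-2}-(y-1)y^{\,n-3}-(y-1)y^{\,n-4}-\cdots-(y-1)-1=0,
\]
so that an $n$-tuple in $A(n)$ is given explicitly, and then chooses $y$ cleverly to force the quality towards $2n-5$ while checking the subsum condition. Your proposal, by contrast, only gestures at an ``iterated construction'' from a powerful integer $m=2^N$ combined with unspecified polynomial identities, supported by a heuristic that each extra summand gains a factor $m^2$ in the numerator of $q$. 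As written this is not a proof: no actual $n$-tuple is exhibited, the subsum condition~(ii) is acknowledged as ``technical work'' but not carried out, and the link between the heuristic gain and the precise constant $2n-5$ is never made. So on the one half of the conjecture that is provable, your plan is both different from and substantially less complete than the argument the paper cites.
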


\noindent
In the following we will informally refer to condition~(ii), as well as to analogous statements introduced below, as the \emph{subsum condition}. Note that in the case $n=3$ this condition excludes only finitely many triples and is therefore irrelevant for the value of $Q_{A(3)}$; this implies that the statement ``$Q_{A(3)}=1$'' is equivalent with the $abc$-conjecture. For larger~$n$, we have the following relationship:
\begin{thm}[Browkin and Brzezi\'nski~\cite{BB94}] \label{thm:abc-BB} 
	If the $abc$-conjecture is false then the $n$-conjecture is false
	for every $n \geq 4$.
\end{thm}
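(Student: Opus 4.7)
The plan is to amplify each high-quality $abc$-triple into a high-quality $n$-tuple in $A(n)$ via an explicit polynomial identity. Under the hypothesis that the $abc$-conjecture fails, there exist some $q>1$ and infinitely many coprime triples $(a,b,c)\in\Z^3$ with $a+b+c=0$ and quality exceeding $q$. From each such triple I would produce an element of $A(n)$ whose quality approaches $(2n-5)\,q$ as $\rad(abc)\to\infty$, yielding $Q_{A(n)} \geq (2n-5)\,q > 2n-5$ and contradicting the $n$-conjecture.

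For the base case $n=4$, the key identity is
\[
 a^3+b^3+c^3-3abc \;=\; (a+b+c)(a^2+b^2+c^2-ab-bc-ca),
\]
which vanishes whenever $a+b+c=0$. Hence the $4$-tuple $(a^3,b^3,c^3,-3abc)$ sums to zero. Condition (iii) is immediate from $\gcd(a^3,b^3,c^3)=1$; for condition (ii), every proper subsum reduces, after substituting $c=-(a+b)$, to a polynomial expression that vanishes only when one of $a,b,c$ is zero (for instance, $a^3-3abc=a(b^2-bc+c^2)$ with $b^2-bc+c^2$ having no nonzero integer roots). Since $\rad(a^3b^3c^3\cdot 3abc)=\rad(3abc)$ and the maximum entry is a cube, the quality tends to $3\,q(a,b,c)>3=2\cdot 4-5$.

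For $n\geq 5$ I would use higher Newton power-sum identities for roots of a cubic with $e_1=0$. Setting $e_2=ab+bc+ca$, $e_3=abc$, and $p_m=a^m+b^m+c^m$, the reduced recursion $p_m=-e_2 p_{m-2}+e_3 p_{m-3}$ expresses each $p_m$ as a polynomial in $e_2,e_3$. Choosing $m=2n-5$ and expanding, then combining adjacent monomials via $a+b+c=0$, yields an integer identity with exactly $n$ nonzero terms summing to zero. For example, $p_5=-5e_2e_3$ together with $a^2b^2c+ab^2c^2=-ab^3c$ gives $(a^5,b^5,c^5,-5ab^3c,5a^2bc^2)$; $p_7=7e_2^2e_3$ with $e_2^2=a^2b^2+b^2c^2+c^2a^2$ gives $(a^7,b^7,c^7,-7a^3b^3c,-7ab^3c^3,-7a^3bc^3)$; and $p_9=-9e_2^3e_3+3e_3^3$ produces the $7$-tuple $(a^9,b^9,c^9,9a^4b^4c,9ab^4c^4,9a^4bc^4,-30a^3b^3c^3)$. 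In each case all three variables appear with the same total exponent in the product of entries, so the radical equals $\rad(abc)$ up to a bounded multiplicative constant depending only on $n$, and the quality tends to $m\cdot q(a,b,c)=(2n-5)\,q(a,b,c)$.

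The main obstacle is verifying condition (ii) in general. Each potentially vanishing proper subsum imposes a specific algebraic constraint on $(a,b,c)$; for each fixed $n$ only finitely many such constraints arise, each cutting out a thin algebraic subset, so passing to a subsequence of the high-quality abc-triples ensures all nontrivial subsums remain nonzero. Secondary issues are confirming that the monomial regrouping yields exactly $n$ nonzero summands without unintended cancellations, and that the overall gcd stays equal to $1$ despite the combinatorial prefactors.
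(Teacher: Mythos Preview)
The paper does not actually prove this theorem; it is quoted from Browkin--Brzezi\'nski~\cite{BB94}. What the paper does contain is the Remark immediately following Conjecture~\ref{conj:bro}, where items~(2) and~(3) record precisely the constructions you use: the quadruple $(a^3,b^3,c^3,-3abc)$ for $n=4$ and a quintuple for $n=5$ (the paper writes $(a^5,b^5,c^5,-5abc^3,5a^2b^2c)$, which agrees with your $(a^5,b^5,c^5,-5ab^3c,5a^2bc^2)$ after a permutation of variables). Your Newton-power-sum extension to higher $n$ is the natural continuation of this idea and is indeed how one proves the general statement; so your approach is essentially the intended one.

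One point deserves a firmer treatment than ``passing to a subsequence.'' For each fixed $n$ your construction produces finitely many candidate subsum polynomials in $a,b$ (after substituting $c=-(a+b)$), and what you need is that none of these polynomials vanishes \emph{identically}; once that is checked, each subsum defines a proper algebraic curve in the affine plane, and since the infinitely many coprime high-quality $abc$-triples are pairwise non-proportional, only finitely many can lie on any such curve. For small $n$ this nonvanishing is easy to verify by hand (as you do for $n=4$), but for a clean general argument you should either exhibit a single generic point $(a,b,c)$ with $a+b+c=0$ at which all proper subsums are nonzero, or give an inductive reason why no proper subsum of your monomials can collapse to the zero polynomial modulo $a+b+c$. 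Your ``secondary issues'' paragraph already names this; it is the one place where the sketch would need to be tightened into a proof.
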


\noindent
One half of the $n$-conjecture is known: Browkin and Brzezi\'nski~\cite[Theorem~1]{BB94} proved for ${n \ge 3}$ that 
$Q_{A(n)} \ge 2n - 5$. This statement is not hard to prove; we will come back to it in Remark~\ref{rem:halb} at the end of this article.

Different conjectures arise when considering different sets $A$ and
one of the main goals of
this article is to clarify the relation between these different
conjectures and to try to unify the picture.

Browkin~\cite{Bro00} introduced the following conjecture he referred to as ``strong $n$\nobreakdash-con\-jec\-ture.''
It is obtained from the $n$-conjecture by requiring that the entries in each
$n$-tuple are pairwise coprime and removing the subsum condition. 
\begin{conj}[Browkin~\cite{Bro00}] \label{conj:bro} $\,$ \\ 
	Let $n \geq3$ and let $B(n)$ be the set of $n$-tuples 
	${(a_1,\ldots,a_n)\in \Z^n}$ such that \begin{enumerate}[(i)] 
	\item $a_1+\ldots+a_n=0$, 
	\item $\gcd(a_i,a_j) = 1$ for all $i,j$ with ${1 \leq i < j \leq n}$. 
	\end{enumerate} 
	Then $Q_{B(n)}<\infty$ for every $n$.
\end{conj}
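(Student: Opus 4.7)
The plan is to argue by induction on $n$, with the base case $n = 3$ essentially being a qualitative form of the $abc$-conjecture. A pairwise coprime nonzero triple summing to zero cannot admit a proper nonempty vanishing subsum (such a subsum would force one entry to be zero), so $B(3) = A(3)$ up to trivial tuples. Hence $Q_{B(3)} < \infty$ is exactly the assertion that the $abc$-quality is bounded; I would take this as external input.

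For the inductive step, fix $n \geq 4$ and assume $Q_{B(m)} < \infty$ for all $3 \leq m < n$. Given a sequence $(a^{(k)})_k \subseteq B(n)$ whose qualities approach $Q_{B(n)}$, I would partition it into two subsequences according to whether or not some proper nonempty subset of the entries sums to zero. Tuples in the first subsequence lie in $A(n)$, and Conjecture~\ref{conj:bb} (or equivalently, via Theorem~\ref{thm:abc-BB}, the $abc$-conjecture) yields $Q_{A(n)} < \infty$.

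For the second subsequence, I would pigeonhole over the finitely many nonempty proper subsets of $\{1, \ldots, n\}$ to pass to a further subsequence along which the vanishing subsum is always indexed by a fixed set $S$, with complement $T$, $|S| = s$, $|T| = n - s$. Both sub-tuples are pairwise coprime and sum to zero, hence lie in $B(s)$ and $B(n-s)$ respectively. The degenerate cases $s \in \{1, 2\}$ are dispatched directly, since they would force either a zero entry (excluded) or two entries of absolute value $1$ (contributing nothing to the limsup of qualities). Pairwise coprimality across $S$ and $T$ gives the multiplicative identity
\[
\rad\Big(\prod_{i=1}^n a_i^{(k)}\Big) \;=\; \rad\Big(\prod_{i \in S} a_i^{(k)}\Big) \cdot \rad\Big(\prod_{j \in T} a_j^{(k)}\Big),
\]
so if $U \in \{S, T\}$ is the half attaining the global maximum then $q(a^{(k)})$ is bounded above by the quality of the restriction to $U$, which by the inductive hypothesis is at most $Q_{B(|U|)} < \infty$.

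The main obstacle is that the argument rests on two conjectural inputs: the qualitative $abc$-conjecture for the base case, and $Q_{A(n)} < \infty$ (which itself reduces to $abc$ via Theorem~\ref{thm:abc-BB}). Neither is available unconditionally, so the genuine content of the plan is only the reduction from $B(n)$ to the smaller cases $B(m)$ together with $A(n)$. In particular, any full proof of Conjecture~\ref{conj:bro} is at least as hard as the $abc$-conjecture itself, which is why it remains open.
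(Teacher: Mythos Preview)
The statement you address is Conjecture~\ref{conj:bro}, which the paper presents as an open problem; there is no proof of it in the paper to compare against. The paper's contributions go in the opposite direction, establishing \emph{lower} bounds on $Q_{U(F,n)}$ (and hence on $Q_{B(n)}$), not the finiteness asserted by the conjecture.

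You are candid that your argument rests on conjectural inputs, and the reduction you sketch---splitting off vanishing proper subsums and applying induction to the pieces---is a natural line of thought. But there is a slip in how you invoke Theorem~\ref{thm:abc-BB}. You claim that $Q_{A(n)} < \infty$ ``reduces to $abc$'' via that theorem, and earlier that Conjecture~\ref{conj:bb} is ``equivalently'' the $abc$-conjecture through it. Theorem~\ref{thm:abc-BB} only says that the falsity of $abc$ implies the falsity of the $n$-conjecture; contrapositively, the $n$-conjecture for any single $n \geq 4$ implies $abc$. It does \emph{not} give the direction you need, namely that $abc$ (or even the weaker $Q_{B(3)} < \infty$) implies $Q_{A(n)} < \infty$. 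So the subsequence with no vanishing proper subsum genuinely requires the upper-bound half of Conjecture~\ref{conj:bb} as a separate hypothesis, not as a consequence of $abc$. Your plan therefore reduces $Q_{B(n)} < \infty$ to the conjunction of $Q_{B(3)} < \infty$ and $Q_{A(m)} < \infty$ for $4 \leq m \leq n$, with no known implication from the former to the latter.
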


\goodbreak 
\begin{rem}\quad \nopagebreak
	\begin{enumerate}[(1)] 
		\item The $abc$-conjecture corresponds to the statement ``$Q_{B(3)}=1$.''
		\item If $Q_{A(4)} \leq 3$, then ${Q_{B(3)} = 1}$. Indeed, 
		assume that there are infinitely many counterexamples $(a,b,c)$
to the $abc$-conjecture of quality at least $q$ with~${q>1}$. Then $Q_{A(4)} \geq 3q$ is witnessed by the quadruples
\[(a^3,b^3,c^3,-3abc).\] 
\item Similarly, ${Q_{A(5)} \leq 5}$ implies that ${Q_{B(3)} = 1}$ via quintuples of the form
\[(a^5,b^5,c^5,-5abc^3,5a^2b^2c).\]
\item More generally, if ${Q_{A(n)} \leq 2n-5}$ for some ${n \geq 4}$, since the reverse inequality is known as mentioned above, it would follow that the $n$-conjecture is true for this particular $n$. As a consequence, in view of Theorem~\ref{thm:abc-BB}, the $abc$-conjecture would be true as well in this case. 
\end{enumerate} 
\end{rem}

\noindent
Konyagin established the following result about Conjecture~\ref{conj:bro}.

\begin{thm}[Konyagin; see Browkin\footnote{We point out that there is
		a typo when Browkin states Konyagin's result; where we say
		``$n \geq 5$'' he says ``$n \geq 3$''. But then Theorem~\ref{konya} would
		already disprove the $abc$-conjecture. Indeed, Konyagin's proof only
		works for odd $n \geq 5$.}\!~\cite{Bro00}]\label{konya}
	\[Q_{B(n)} \geq 
	\begin{cases}
		1 & \text{if } n \geq 4 \text{ is even,}\\
		\nicefrac32 &  \text{if } n \geq 5 \text{ is odd.}\\
	\end{cases}\]
\end{thm}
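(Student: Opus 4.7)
\medskip
\noindent\textbf{Proof sketch.} I would prove the two lower bounds separately by exhibiting, in each case, an infinite family of tuples in $B(n)$ whose quality approaches (or exceeds) the claimed value.

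\smallskip
\noindent\emph{Even case ($n \geq 4$, bound $1$).} My plan is to use the elementary family
\[
\bigl(-p^k,\, \underbrace{1, 1, \ldots, 1}_{n-2 \text{ times}},\, p^k - (n-2)\bigr),\qquad k \in \mathbb{N},
\]
where $p$ is a fixed prime not dividing $n-2$ (such primes always exist). The entries sum to zero, and pairwise coprimality follows from $\gcd(p^k, p^k - (n-2)) = \gcd(p^k, n-2) = 1$ together with the trivial coprimality of $1$ with every integer. Since $\rad(p^k - (n-2)) \leq |p^k - (n-2)| < p^k$, the quality of the tuple is bounded below by $\log(p^k)/\log(p \cdot p^k) = k/(k+1)$, which tends to $1$. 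This yields $Q_{B(n)} \geq 1$ at once.

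\smallskip
\noindent\emph{Odd case ($n \geq 5$, bound $3/2$).} Here the bound strictly exceeds $1$, so a construction exploiting algebraic identities is needed. My plan is to use the classical identity
\[
x^3 + y^3 + z^3 - 3 x y z = (x + y + z)(x^2 + y^2 + z^2 - xy - yz - zx),
\]
which for $x+y+z=0$ yields $x^3 + y^3 + z^3 = 3 x y z$. Hence, for any pairwise coprime triple $(x, y, z)$ with $x + y + z = 0$, the quadruple $(x^3, y^3, z^3, -3xyz)$ sums to zero. However, this quadruple is \emph{not} pairwise coprime: the last entry shares each prime factor of $x$, $y$, and $z$. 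To recover pairwise coprimality while reaching an odd length $n \geq 5$, I would split the problematic entry $-3xyz$ into $n-3$ integers $u_1, \ldots, u_{n-3}$ that are pairwise coprime among themselves and to $xyz$. For $n = 5$ this amounts to writing $-3xyz = u + v$ in such a way that $u$ is (say) a prime power whose sole prime is coprime to $3xyz$, and $v = -3xyz - u$ is coprime to both $u$ and $xyz$. The quality analysis then gives $\log\max \sim 3\log|z|$, while, in the best attainable regime, $\log\rad \sim 2\log|z|$, producing qualities tending to $3/2$. For larger odd $n$, one adds further pairs of pairwise coprime filler entries summing to zero among themselves, which does not harm the asymptotic quality.

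\smallskip
\noindent\emph{Main obstacle.} The crux lies in the odd case, namely controlling the radicals of the filler entries so that the total radical stays of order $|z|^2$ rather than the generic $|z|^3$ (which would give quality only near $3/4$). This forces the base triple $(x, y, z)$ to be drawn from a suitable infinite family rather than chosen arbitrarily, and it requires that the Diophantine equation $u + v = -3xyz$ admits solutions with $u, v$ of sufficiently small radical relative to $|xyz|$ for infinitely many parameter choices. Establishing this existence, presumably via primes in arithmetic progressions or a parametric family of powerful splittings, is where the real work of the proof resides.
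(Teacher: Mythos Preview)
The paper does not prove this theorem in full; it is quoted from Browkin~\cite{Bro00}. It does, however, illustrate Konyagin's method in the proof of Theorem~\ref{th:konyagin} (the stronger statement $Q_{U(\emptyset,5)}\ge\nicefrac32$), and it remarks that for odd $n\ge7$ Konyagin's tuples lie in $B(n)\setminus R(n)$---so your padding idea for larger odd~$n$ is right, the filler pairs necessarily being $(1,-1)$. Your even case is also fine.

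Your odd case, however, has a genuine gap, and it is exactly the one you flag as the ``main obstacle.'' From $x^3+y^3+z^3=3xyz$ you are forced to split a quantity $-3xyz$ of size $\sim|z|^2$ into pieces $u,v$ coprime to $xyz$; since $|u|+|v|\ge|3xyz|$, at least one of them is of size $\gtrsim|z|^2$, and nothing in your outline forces its radical to be small. Generically this yields $\log\rad\sim 4\log|z|$ (a contribution $\sim|z|^2$ from $\rad(xyz)$ and another $\sim|z|^2$ from $\rad(v)$), hence quality near $\nicefrac34$, not $\nicefrac32$. Neither ``primes in arithmetic progressions'' nor an unspecified ``parametric family of powerful splittings'' gets you out of this: you would essentially need infinitely many instances of $-3xyz-p^m$ being powerful, which is itself a hard Diophantine statement.

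Konyagin's actual construction sidesteps the obstacle by using a \emph{different} identity,
\[(s+1)^3-(s-1)^3-6s^2=2,\]
in which only two of the three large terms are cubes. The third large term $-6s^2$ is arranged to have \emph{bounded} radical by taking $s=6^{2^k}$, so that $\rad(6s^2)=6$; the three large terms then sum to the constant~$2$, and one merely writes $-2=29-31$, checking that $29$ and $31$ never divide $s\pm1$ for these~$s$. The idea you are missing is precisely this: choose the polynomial identity so that the residual to be split is \emph{bounded} rather than of order $|z|^2$; then no radical control on the split pieces is needed at all.
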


\noindent
For completeness we mention that Konyagin's result can also be derived from an example given by Darmon and Granville~\cite[item (d) on page 542]{DG95} by choosing~${t = 2^k}$; they cite correspondence with Noam D.\ Elkies
as the source.

\medskip
\noindent
Another variant of the $n$-conjecture that we study is the following.

\begin{conj}[Ramaekers~\cite{Ram09}]\label{conj_rae} $\,$ \\ 
	Let $n \geq3$ and let $R(n)$ be the set of $n$-tuples ${(a_1,\ldots,a_n)\in \Z^n}$ such that 
	\begin{enumerate}[(i)] 
		\item $a_1+\ldots+a_n=0$, 
		\item there are no ${b_1,\ldots,b_n \in \{0,1\}}$ and $i,j$ with 
		${1\leq i,j\leq n}$ such that ${b_i=0}$ and ${b_j=1}$ and 
		${\sum_{k=1}^n b_k \cdot a_k = 0}$, 
		\item ${\gcd(a_i,a_j) = 1}$ for $i,j$ with ${1 \leq i < j \leq n}$. 
	\end{enumerate} 
	Then $Q_{R(n)}=1$ for every $n$. 
\end{conj}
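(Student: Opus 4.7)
Ramaekers's conjecture asserts $Q_{R(n)}=1$ for every $n \geq 3$, but the abstract announces $Q_{R(n)} \geq 5/4$ for $n \geq 6$ and $Q_{R(n)} \geq 5/3$ for odd $n \geq 5$, so the task is really to \emph{refute} the conjecture for such $n$ by producing an infinite family of tuples in $R(n)$ whose qualities stay bounded away from $1$. The natural starting point is Konyagin's construction underlying Theorem~\ref{konya}, which already gives $Q_{B(n)} \geq 3/2$ for odd $n$ and $Q_{B(n)} \geq 1$ for even $n \geq 4$. Since $R(n) \subseteq B(n)$, one cannot simply reuse Konyagin's tuples verbatim; the plan is to redesign them so that the extra subsum condition (ii) of Conjecture~\ref{conj_rae} is also satisfied, and simultaneously to amplify the quality by pushing a fifth power into a dominant coordinate.

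Concretely, I would parametrise a family of $n$-tuples by a rapidly growing parameter $t$ (Darmon--Granville type $t = 2^k$, or a Pell-type sequence) so that one coordinate has absolute value on the order of $t^5$, while the prime factors appearing anywhere in the tuple are drawn from a small, controlled set. The target quality $5/3$ in the odd case then corresponds to arranging $\rad(a_1 \cdots a_n)$ to be on the order of $t^3$, and the target $5/4$ in the even case corresponds to $\rad(a_1 \cdots a_n)$ on the order of $t^4$. The algebraic identity underlying these tuples can be modelled on the quintuple $(a^5, b^5, c^5, -5abc^3, 5a^2b^2c)$ of the preceding remark, but rearranged and scaled so that the resulting entries are pairwise coprime; the key change of perspective is that we do \emph{not} need the auxiliary triple $(a,b,c)$ to have high $abc$-quality itself, only that its entries have small radical relative to $t$.

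The principal obstacle is the subsum condition: for each of the $2^n - 2$ proper non-trivial subsets $S$ of $\{1,\ldots,n\}$, one must exclude $\sum_{k \in S} a_k = 0$ for all but finitely many members of the family. Pairwise coprimality can usually be read off from the algebraic form of the construction, and the full-sum relation $a_1 + \ldots + a_n = 0$ is built into the identity. Subsum vanishing, however, is a global constraint, and the standard techniques to break it are either a $p$-adic valuation argument (pick a prime $p$ such that exactly one coordinate carries a large power of $p$, forcing any $S$ containing that coordinate to have non-zero subsum modulo a suitable $p^m$) or a size argument (the dominant coordinate outweighs the sum of the absolute values of all the others). I expect the bulk of the technical work to consist of such a case analysis, tailored to the specific identity, followed by the asymptotic computation showing $q(a^{(k)}) \to 5/3$ or $5/4$ as $k \to \infty$.
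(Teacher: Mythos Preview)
Your high-level plan is in the right spirit, but the concrete identity you propose will not deliver pairwise coprimality, and that is the heart of the matter. In the quintuple $(a^5,b^5,c^5,-5abc^3,5a^2b^2c)$ every prime dividing $a$, $b$ or $c$ appears in at least two coordinates, and no ``rearranging and scaling'' can repair this while keeping the sum equal to zero: the cross-terms are intrinsic to the expansion of $(a+b+c)^5$. The paper avoids this trap entirely by working with \emph{single-variable} polynomial identities. For odd $n\ge 5$ the core identity is
\[
(x-1)^5+10(x^2+1)^2-(x+1)^5=8,
\]
so that $a_1=(x-1)^5$, $a_2=10(x^2+1)^2$, $a_3=-(x+1)^5$ are automatically pairwise coprime for even~$x$ not divisible by~$5$, and the remaining $n-3$ coordinates are constants chosen to absorb the residual~$8$. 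For $n\ge 6$ a more elaborate four-term identity in two parameters $x,y$ plays the same role.

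A second ingredient you only gesture at is decisive for reaching $5/3$ rather than $5/4$. The radical of $a_1a_2a_3$ above is \emph{a priori} of order $x^4$, not $x^3$, because of the factor $x^2+1$; one gets $5/3$ only if $\rad(x^2+1)$ is $O(x)$. The paper forces this with a Pell equation: choosing $x=ys$ where $(s,t)$ solves $y^2s^2-(y^2+1)t^2=-1$ gives $x^2+1=(y^2+1)t^2$, so $\rad(x^2+1)$ divides $(y^2+1)t$, which is $O(x)$. Your mention of ``Pell-type sequences'' suggests you sensed this, but without making it explicit the quality computation does not close.

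Finally, the passage from $5$ (or $6$) to general $n$ is not automatic. The paper handles it with a dedicated factor-avoidance lemma: given a target sum $u$ and a bound $m$, one can split $u=v+w$ with $v,w$ coprime and free of all prime factors $\le m$. This lets one pad with large primes $a_4,\ldots,a_{n-2}$ and then choose $a_{n-1},a_n$ so that the total sum vanishes, coprimality survives, and the subsum condition is verified by a pure size argument (each new term exceeds three times the previous). Your proposal contains no mechanism of this kind.
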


\noindent
Note that Ramaekers' conjecture maintains the subsum condition
from the original $n$-conjecture, unlike Browkin's. 
Darmon and Granville~\cite[end of Section 5.2]{DG95} also mention this
statement as the ``generalised $abc$-conjecture,'' but only
conjecturing $Q_{R(n)} < \infty$ and without clarifying whether they
require pairwise or setwise coprimeness.

Except for $(1,-1,0)$ and its reorderings, all triples
in $B(3)$ are also in~$R(3)$, thus the $abc$-conjecture is 
equivalent to the claim that ${Q_{R(3)} = 1}$. Ramaekers computed numerous example elements of $R(3)$, $R(4)$ and $R(5)$ of quality larger than $1$. Here, the examples in~$R(4)$ exhibited a tendency of being of smaller quality than those in~$R(3)$, which could make one  suspect that disproving the claim ``$Q_{R(4)} = 1$'' might be even harder than disproving the $abc$-conjecture. We are however unaware of any known implications between the cases $n=3$ and $n=4$; for larger $n$, though, we will see below that~$Q_{R(n)} > 1$.

As $R(n)$~is a strictly smaller set than~$B(n)$, a priori $Q_{R(n)}$~could be smaller than~$Q_{B(n)}$. Thus, we cannot directly deduce anything about $Q_{R(n)}$ from Theorem~\ref{konya}; indeed, for odd $n \geq 7$,
Konyagin's proof of Theorem~\ref{konya} uses $n$-tuples which are
in~$B(n)\setminus R(n)$.

In this article we introduce two new restrictions, namely a \emph{stronger subsum condition} on the one hand, and the 
{\em set of forbidden factors} $F$ on the other hand. We will work with the following definition which is purposely designed for 
proving lower bounds on $Q_{R(n)}$; see Fact~\ref{compare_fact} below. 
\begin{defn}\label{def:U}  
	Let $n \geq 3$ and let ${F \subseteq \mathbb{N}}$ be a finite set, \rr{where ${\min F \geq 3}$ in case that $F\neq\emptyset$.}  
	We let $U(F,n)$ contain all
	${(a_1,\ldots,a_n)\in\Z^n}$ satisfying the following conditions:
	\begin{enumerate}[(i)]
		\item $\gcd(a_i,a_j) = 1$ for $i,j$ with $1 \leq i < j \leq n$;
		\item $a_1+\ldots+a_n=0$;
		\item there are no $b_1,\ldots,b_n \in \{-1,0,1\}$ and $i,j$ with
		$1\leq i,j\leq n$ such that $b_i=0$ and $b_j=1$ and
		$\sum_{k=1}^n b_k \cdot a_k = 0$;
		\item none of the numbers $a_1,\ldots,a_n$ is a multiple of any number
		in $F$. 
	\end{enumerate}
\end{defn}

\begin{rem}\quad \nopagebreak
	\begin{enumerate}[(1)]
		\item If $F$~is empty then condition~(iv) is vacuously satisfied by every $n$-tuple. 
		
		\item 
		If~${2 \in F}$
		and $n$~is odd, then ${U(F,n) = \emptyset}$ since the sum of an odd number
		of odd integers cannot be~$0$. For the case that $n$~is even, note that by condition~(i) at most one of the $a_i$'s can be even; but then by condition~(ii) no $a_i$ can be even. Thus the assumption $2\in F$ is unnecessary in this case, and can be omitted. In summary, we do not consider the case $2\in F$.
	\end{enumerate}
\end{rem}

\noindent We are interested in questions of the following type.

\begin{quest} 
	Fixing different choices of $F$ and $n$,
	what are valid lower bounds on $Q_{U(F,n)}$?
\end{quest}

\noindent
While Browkin and Brzezi\'nski opted to only allow coefficients $b_j\in\{0,1\}$ in the subsum condition in Conjecture~\ref{conj:bb}, our new condition (iii) above is more demanding as it allows negative coefficients as well. Thus the quality lower bounds we will establish below are proven for a smaller set of $n$-tuples and will therefore also hold for the conjectures stated above. More precisely stated,
the following relationships between the different strong $n$-conjectures are immediate.

\begin{fct}\label{compare_fact}
	For every ${n \in \mathbb{N}}$ and any $F$ as above we have ${Q_{U(F, n)} \leq Q_{A(n)}}$ as well as ${Q_{U(F, n)} \leq Q_{R(n)} \leq Q_{B(n)}}$.
\end{fct}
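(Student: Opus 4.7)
The plan is to reduce the inequality between qualities to pure set inclusion. Recall that $Q_S$ for a set $S\subseteq\Z^n$ is (by definition) the supremum of $\limsup_k q(a^{(k)})$ taken over all sequences $(a^{(k)})_{k\ge 1}$ whose members lie in $S$. Hence $Q$ is \emph{monotone} under inclusion: if $S_1\subseteq S_2$ then every sequence witnessing $Q_{S_1}$ also lies in $S_2$, so $Q_{S_1}\le Q_{S_2}$. It therefore suffices to verify the three set inclusions
\[
U(F,n)\subseteq A(n),\qquad U(F,n)\subseteq R(n),\qquad R(n)\subseteq B(n).
\]

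For each inclusion I would simply match up the conditions in the respective definitions. The inclusion $R(n)\subseteq B(n)$ is immediate, since $R(n)$ is defined by the two defining conditions of $B(n)$ (sum zero and pairwise coprime) together with an additional subsum condition. For $U(F,n)\subseteq R(n)$, condition~(ii) of~$U(F,n)$ is exactly condition~(i) of~$R(n)$; condition~(i) of~$U(F,n)$ (pairwise coprimeness) coincides with condition~(iii) of~$R(n)$; and condition~(iii) of~$U(F,n)$, which forbids vanishing signed subsums with coefficients in $\{-1,0,1\}$, is strictly stronger than condition~(ii) of~$R(n)$, which only forbids vanishing subsums with coefficients in $\{0,1\}$ (and is a special case of the former by restricting the coefficients). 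Finally, for $U(F,n)\subseteq A(n)$ the subsum implication is the same, the sum-zero conditions agree, and the pairwise coprime condition~(i) of $U(F,n)$ obviously implies the weaker overall coprime condition~(iii) of $A(n)$, namely $\gcd(a_1,\ldots,a_n)=1$. The extra condition~(iv) concerning $F$ only shrinks $U(F,n)$ further and is harmless for each inclusion.

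Combining the three inclusions with the monotonicity observation above yields the chain ${Q_{U(F,n)}\le Q_{A(n)}}$ and ${Q_{U(F,n)}\le Q_{R(n)}\le Q_{B(n)}}$, as claimed. There is no real obstacle here: the entire content of the fact is bookkeeping between the four definitions, and the only point worth flagging to the reader is that the stronger subsum condition in (iii) of Definition~\ref{def:U} implies the weaker ones used in $A(n)$ and $R(n)$, so that the extra freedom of negative coefficients translates into a smaller admissible set and hence a (possibly) smaller quality.
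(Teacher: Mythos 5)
The paper declares this fact ``immediate'' and gives no proof, so there is nothing to contrast against; your argument is exactly the intended unpacking of the definitions. You correctly reduce to monotonicity of $Q$ under set inclusion and then verify the three inclusions $U(F,n)\subseteq A(n)$, $U(F,n)\subseteq R(n)$, $R(n)\subseteq B(n)$ by matching conditions, noting that the $\{-1,0,1\}$-subsum condition implies the $\{0,1\}$-subsum condition, that pairwise coprimeness implies $\gcd(a_1,\ldots,a_n)=1$, and that condition~(iv) only shrinks $U(F,n)$.
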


\noindent
This means in particular that, by fixing the right parameters, our new definition provides a framework which can be used to prove lower bounds on both Browkin's and Ramaekers' versions of the problem. 

\medskip
\noindent
In the remainder of this article, we will prove lower bounds for $Q_{U(F,n)}$ for suitable parameters~$F$ and~$n$. First, we will improve Konyagin's construction cited above to obtain
the following stronger version of his result. 
\begin{thm}\label{th:first}
	Let $F$ be such that 
	$2,5,10 \notin F$. Then ${Q_{U(F,n)} \geq \nicefrac53}$ for each odd ${n \geq 5}$. In particular, ${Q_{U(\emptyset,n)} \geq \nicefrac53}$ for these $n$. 
\end{thm}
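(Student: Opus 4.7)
My plan is to construct, for each odd $n \ge 5$, an infinite family of $n$-tuples in $U(F,n)$ whose qualities tend to $\nicefrac{5}{3}$. The argument proceeds in three stages: (1) design a base family of quintuples ($n=5$); (2) extend inductively to odd $n \ge 7$ by enlarging the tuple two entries at a time; and (3) verify all four conditions of Definition~\ref{def:U}, the strengthened subsum condition being the most delicate.

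For stage (1) I seek an explicit polynomial identity in an integer parameter $t$, of the shape $P_1(t) + \cdots + P_5(t) = 0$, whose specialisation at $t = 2^k$ yields pairwise coprime integer entries. The required asymptotics are that the largest $|P_i(2^k)|$ has size $\Theta(t^\alpha)$ while the radical of the product $P_1(2^k) \cdots P_5(2^k)$ is only $\Theta(t^{3\alpha/5})$, so that $q \to \nicefrac{5}{3}$. This is a strengthening of the Konyagin/Darmon--Granville family (which yields only $\nicefrac{3}{2}$) and the improvement should come from exploiting an additional source of cancellation, either a $5$th-power identity contributing an explicit factor of $5$ (as in the quintuple from Remark following Conjecture~\ref{conj:bro}), or a $5$-adic divisibility obtained via a lifting-the-exponent estimate applied to $2^k - 1$. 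This explains why the hypothesis $2,5,10 \notin F$ is imposed: these primes must be allowed as factors of the entries so that condition~(iv) is not accidentally violated.

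For stage (2), the trivial extension by adjoining $(+1,-1)$ is not available, since taking $b_i = 1$ on those two entries and $b_k = 0$ elsewhere would immediately violate the strong subsum condition~(iii). Instead, I replace one of the smaller current entries $a_i$ by three new pairwise coprime integers $b_1, b_2, b_3$ with $b_1 + b_2 + b_3 = a_i$, chosen coprime to every remaining entry, free of every prime in $F \cup \{2, 5\}$, and of magnitude much smaller than $\max_j|a_j|$. A greedy construction, exploiting the abundance of primes avoiding any prescribed finite set of residue classes, produces such a decomposition. Each substitution increases the tuple length by exactly two and, because the new entries are comparatively small, neither the maximum nor the leading-order radical is affected, so the quality asymptotic is preserved.

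Stage (3) is immediate for conditions (i), (ii), (iv) by construction. The main obstacle of the proof lies in condition (iii). For every candidate coefficient vector $(b_1,\ldots,b_n) \in \{-1,0,1\}^n$ with at least one $b_i = 0$ and one $b_j = 1$, I must show $\sum_k b_k a_k \ne 0$. A size comparison forces any would-be vanishing sum to include the dominant entry $a_\ell$, since otherwise the magnitude of $\sum b_k a_k$ is bounded by the total of the smaller entries, which is asymptotically negligible compared with $|a_\ell|$. Fixing $b_\ell = \pm 1$ then reduces the problem to a finite collection of signed sums over the remaining entries; using the explicit polynomial form of the $P_i(t)$, I show that none of these equals $\mp a_\ell$ once $t$ is sufficiently large. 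Handling the $\{-1,0,1\}$-coefficients rather than only $\{0,1\}$ roughly doubles the number of sign patterns to exclude and constitutes the most technical part of the verification; the final quality bound then follows directly from the polynomial asymptotics arranged in stage~(1).
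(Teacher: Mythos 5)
The central difficulty in proving $Q_{U(F,n)} \geq \nicefrac53$ is not simply exhibiting a quintic identity of the form $P_1(t) + \cdots + P_5(t) = 0$; it is reducing the radical of the constructed product to roughly degree~$3$ in the parameter. Your stage~(1) acknowledges that you need $\rad(P_1 \cdots P_5) = \Theta(t^{3\alpha/5})$, but the two mechanisms you suggest for achieving this would not produce it. The quintuple $(a^5, b^5, c^5, -5abc^3, 5a^2b^2c)$ from the Remark lives in $A(5)$, not in any pairwise-coprime set: the last two entries share the factor $5abc$, so this construction is unusable here. And a lifting-the-exponent estimate applied to $2^k - 1$ is precisely the Konyagin mechanism that yields the cubic identity and only $\nicefrac32$. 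The paper's actual identity is $(x-1)^5 + 10(x^2+1)^2 - (x+1)^5 = 8$, where the degree-$4$ middle term $10(x^2+1)^2$ would naively contribute a degree-$2$ factor to the radical, and no ``extra power of $5$'' trick removes it.

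What the paper does instead, and what is missing from your proposal, is a Pell-equation parametrization: it introduces a fixed integer $y$ and restricts attention to $x = ys$ where $(s,t)$ solves $y^2 s^2 - (y^2+1)t^2 = -1$, so that $x^2 + 1 = (y^2+1)t^2$ and hence $a_2 = 10(x^2+1)^2 = 10(y^2+1)^2 t^4$ contributes only $O(t) = O(x)$ to the radical instead of $O(x^2)$. This single device is what raises the attainable exponent from $\nicefrac54$ (which is exactly what the paper's Theorem~\ref{th:third} achieves for general $n \geq 6$ by a different degree-$5$/degree-$4$ construction without such a radical-reduction step) to $\nicefrac53$. Without some concrete replacement for this mechanism, your family cannot beat $\nicefrac54$. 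Separately, your stage~(2) plan of splitting a small entry into three coprime pieces to step from $n$ to $n+2$ is plausible but unnecessary: the paper simply builds $a_4,\ldots,a_n$ as $x$-independent constants (via the factor-avoidance Lemma~\ref{pr:factoravoidance}) once and for all, which also makes the subsum verification in your stage~(3) cleaner, since those constants can be chosen to grow geometrically.
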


\noindent
In particular, Ramaekers' conjecture is wrong for odd ${n \geq 5}$. Even integers are covered by our second main result, which holds for arbitrary ${n \ge 6}$ and arbitrary finite~$F$.

\begin{thm} \label{th:third}
	Let $n \geq 6$ and let $F$ be an arbitrary finite set. Then
	\[Q_{U(F,n)} \geq \nicefrac54.\]
	In particular, $Q_{R(n)} \geq \nicefrac54$ for each ${n \geq 6}$. 
\end{thm}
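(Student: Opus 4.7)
The strategy is to construct, for any finite $F$ and any $n \geq 6$, an explicit infinite family of tuples in $U(F,n)$ whose qualities tend to $\nicefrac{5}{4}$ from below. The construction will be based on a polynomial identity in one or two coprime integer parameters, so that the defining conditions of $U(F,n)$ can be verified from the polynomial structure, the forbidden factors in $F$ can be avoided by restricting the parameters to a suitable arithmetic progression, and the quality can be read off from a comparison of polynomial degrees.

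A natural starting point is the identity
\[ a^5 + b^5 - (a+b)^5 + 5 a b (a+b)(a^2 + ab + b^2) = 0, \]
which already suggests the target quality $\nicefrac{5}{4}$: the dominant terms have degree $5$ in $a, b$, while the radical of the product is bounded above by $5 a b (a+b) \cdot \rad(a^2 + ab + b^2)$, which is of order at most $b^4$ (taking $b \geq a$). The four terms as written share common factors, so to obtain a pairwise coprime tuple of size at least $6$ one must split the ``correction'' term into coprime subsummands---for instance by exploiting arithmetic factorizations of $a^2 + ab + b^2$, or by combining two instances of the base identity with suitably chosen parameters. For $n > 6$, one then adjoins further coprime terms that preserve the sum-zero and subsum conditions, for example by iterating the construction or by introducing auxiliary monomial factors that do not change the asymptotic quality.

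The verification of the $U(F,n)$ conditions then proceeds as follows. The sum-zero condition (ii) is automatic. The pairwise coprimeness (i) reduces to a finite list of congruence restrictions on $(a, b)$, ensuring that $a$, $b$, $a+b$, $5$, and the prime factors of $a^2+ab+b^2$ are mutually coprime in the appropriate pairs. The avoidance condition (iv) produces a further finite list of congruence exclusions for $(a, b)$ modulo the primes dividing the elements of $F$; by the Chinese Remainder Theorem, these exclusions are simultaneously satisfiable, and the infinite family can be drawn from the resulting arithmetic progression. With $a$ fixed (say $a = 1$) and $b \to \infty$ along this progression, one computes $\max_i |a_i| \sim b^5$ and $\rad(\prod_i a_i) = O(b^4)$, yielding $q(a^{(b)}) \to \nicefrac{5}{4}$ as required.

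The main technical obstacle is the verification of the strong subsum condition (iii), which allows coefficients in $\{-1, 0, 1\}$ rather than $\{0, 1\}$ and hence is strictly more restrictive than the Browkin--Brzezi\'nski condition. One must show that no non-trivial $\{-1, 0, 1\}$-linear combination of the entries vanishes. The bulk of the roughly $3^n$ such combinations produce non-zero polynomial identities in $(a, b)$ and therefore fail for all but finitely many parameter values, which can be excluded from the arithmetic progression. The danger is that some combinations may vanish identically as polynomials, reflecting algebraic symmetries of the underlying identity; ruling these out requires a careful case analysis of the specific identity and possibly breaking any such symmetries by modifying the construction.
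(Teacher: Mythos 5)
Your proposal outlines an approach based on the identity $a^5+b^5-(a+b)^5+5ab(a+b)(a^2+ab+b^2)=0$, but it contains a gap that is fatal at the exact point where the real work has to be done, namely splitting the correction term $5ab(a+b)(a^2+ab+b^2)$ into subsummands that are pairwise coprime with $a^5$, $b^5$, and $(a+b)^5$. Splitting it \emph{multiplicatively} does not help: each factor still carries some of $a$, $b$, or $a+b$, so coprimeness fails. Splitting it \emph{additively} as $c_1+c_2+\cdots$ typically destroys the quality: with $a=1$ and $b\to\infty$, each piece $c_i$ has size of order $b^4$, and generically $\rad(c_i)\sim b^4$, so already two pieces push $\rad(\prod a_i)$ to order $b^{10}$ against $\max|a_i|\sim b^5$, giving a quality that tends to $\nicefrac12$, not $\nicefrac54$. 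Your quality estimate $\rad=O(b^4)$ silently treats the correction term as a single factored quantity; as soon as you split it additively into coprime pieces, that estimate no longer holds unless the pieces are carefully engineered to have bounded radical, and you give no mechanism for doing that.

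This is precisely where the paper's construction does something structurally different. The paper takes four leading terms $a_1=(x+y)^5$, $a_2=-(x-y)^5$, $a_3=-(10y-1)x^4$, $a_4=-(x^2+10y^3)^2$ whose degree-$5$, degree-$4$, and degree-$2$ parts in $x$ cancel \emph{identically}, so that $a_1+a_2+a_3+a_4=2y^5-100y^6$ is independent of the growing parameter $x$. The residual is therefore a constant, and the remaining entries $a_5,\ldots,a_n$ (constructed from Lemma~\ref{pr:factoravoidance} and a list of large primes) are bounded and contribute only a constant to the radical. Moreover, the term $x^4$ in $a_3$ has $\rad(x^4)=\rad(x)=\rad(y+1)$, which is also constant, because $x=(y+1)^{h!}$ is chosen as a high power of a fixed base; this same choice gives $x\equiv 1$ modulo all small primes by Fermat's little theorem and so handles both pairwise coprimeness and avoidance of $F$ in one stroke. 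Your proposal has neither the identity with constant residual nor the high-power parameter, and the CRT step you describe would only give a single admissible $(a,b)$ per modulus, not the bounded-radical leading terms the argument needs. As written, the approach does not establish the bound, and the claimed ``main technical obstacle'' (the subsum condition) is actually secondary to the unresolved coprime-splitting issue.
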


\noindent
We stress that these results disprove Ramaekers' conjecture for any ${n \geq 5}$. 

Finally, we will conclude with a brief discussion of $n$-tuples $(a_1, \ldots, a_n)$ that are coprime but not necessarily 
pairwise coprime, with a particular focus on Conjecture~\ref{conj:bb} of Browkin and Brzezi\'nski.

\section{The case of odd $n \geq 5$}

\noindent
As a warm-up and an illustration of Konyagin's technique, we first give a proof of a weaker version of Theorem~\ref{th:first} for~$n=5$. In the process we slightly modify the construction that he used to prove Theorem~\ref{th:konyagin}, so as to obtain a bound on $Q_{U(\emptyset,5)}$ in place of $Q_{B(5)}$.

\begin{thm}\label{th:konyagin} 
	${Q_{U(\emptyset,5)} \geq \nicefrac32}$. 
\end{thm}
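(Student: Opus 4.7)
The strategy is to exhibit an explicit infinite family of 5-tuples $a^{(k)} = (a_1^{(k)}, \ldots, a_5^{(k)}) \in U(\emptyset, 5)$, indexed by a growing integer parameter $k$, such that the largest entry in absolute value grows like $T_k^3$ while $\rad(a_1^{(k)} \cdots a_5^{(k)})$ grows only like $T_k^2$, for a suitable auxiliary quantity $T_k \to \infty$. The quality of the resulting sequence then tends to $\nicefrac{3}{2}$.

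The plan is to start from Konyagin's original construction witnessing $Q_{B(5)} \geq \nicefrac{3}{2}$ in Theorem~\ref{konya}, which produces pairwise coprime 5-tuples summing to zero and built around one highly power-full entry (essentially a large power of a fixed prime) together with four auxiliary entries of lower order of magnitude. Since Konyagin's result only concerns Conjecture~\ref{conj:bro}, his tuples are not required to satisfy any subsum condition. To obtain the statement for $U(\emptyset,5)$, I would modify his construction by a mild perturbation of the defining parameters, so that the strong subsum condition~(iii) of Definition~\ref{def:U} also becomes satisfied, while pairwise coprimality, sum-to-zero, and the asymptotic growth rates of both the entries and the radical are preserved.

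With such a family in hand, the verification splits into four routine checks and one substantive one. Pairwise coprimeness follows directly from the algebraic shape of the tuple, since the various entries are supported on essentially disjoint finite sets of primes; the sum-to-zero condition holds by design; condition~(iv) is vacuous because $F = \emptyset$; and the asymptotic estimates
\[
 \max\bigl\{|a_1^{(k)}|,\ldots,|a_5^{(k)}|\bigr\} \sim T_k^{3}, \qquad \rad\bigl(a_1^{(k)} \cdots a_5^{(k)}\bigr) \sim T_k^{2}
\]
then immediately yield $q(a^{(k)}) \to \nicefrac{3}{2}$ as $k \to \infty$.

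The main obstacle is the verification of the strengthened subsum condition~(iii). After the modification, one must rule out every relation $\sum_{i=1}^{5} b_i a_i^{(k)} = 0$ with $b_i \in \{-1,0,1\}$ of the forbidden type. For any nontrivial subsum involving the dominant entry, this is immediate for large $k$ by a size comparison, since the remaining four entries are too small in absolute value to cancel the dominant term. For subsums involving only the smaller entries, there are just finitely many candidate linear relations, and each one can be excluded either by a direct algebraic argument or by reducing modulo a well-chosen small prime dividing one of the entries of the construction. This interplay between size estimates and congruence arguments is the delicate part of the proof, and it is exactly where the perturbation of Konyagin's original construction pays off.
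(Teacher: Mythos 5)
Your high-level plan matches the paper's: start from Konyagin's 5-tuples, modify the parameters so the subsum condition of Definition~\ref{def:U} is also satisfied, and then verify the conditions and estimate the quality. However, two points in the sketch would not survive being turned into an actual proof.

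First, your description of Konyagin's construction is internally inconsistent. You say it is built around \emph{one} highly power-full dominant entry of order $T_k^3$ together with four auxiliary entries of strictly lower order of magnitude. Such a tuple cannot sum to zero: if one entry has absolute value $\sim T_k^3$ and the other four are all $o(T_k^3)$, their total cannot vanish. The actual construction (and the paper's modification of it) has \emph{two} entries of order $T_k^3$, namely $(s+1)^3$ and $-(s-1)^3$ with $s = 6^{2^k}$, chosen so that their sum collapses to $6s^2+2$; a third entry $-6s^2$ of order $T_k^2$; and two bounded entries. It is precisely this collapse of the two large cubes down to a term of order $T_k^2$ that lets the radical be $O(T_k^2)$ while the maximum is $\sim T_k^3$.

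Second, and more importantly, your treatment of the subsum condition is wrong for the same reason. You claim that any nontrivial subsum involving the dominant entry is excluded by a size comparison because the remaining four entries are too small to cancel it. That fails exactly in the case where both $\pm(s+1)^3$ and $\mp(s-1)^3$ appear with opposite signs: their contribution is only $\pm(6s^2+2)$, which is of the same order as the third entry $-6s^2$ and so \emph{can} be nearly cancelled. This is the genuinely delicate case, and the paper handles it not by size but by the exact identity $(s+1)^3 - (s-1)^3 - 6s^2 = 2$, which forces the two bounded entries (in the paper, $29$ and $-31$) into the subsum as well, leaving only the trivial full sum. Your sketch also silently assumes the small constants are automatically coprime with the large terms (``essentially disjoint finite sets of primes''), whereas the paper must actually compute $6^{2^k}\pmod{29}$ and $6^{2^k}\pmod{31}$ to verify that neither prime divides $s-1$, $s$, or $s+1$. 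Until the explicit tuple, the identity, and these congruence checks are supplied, the argument has a real gap.
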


\begin{proof}
Fix any integer~$k \geq 1$ and let 
\[{a = (6^{2^k}+1)^3},\quad {b =
-(6^{2^k}-1)^3},\quad {c = -6 \cdot (6^{2^k})^2}, \quad d = -31, \quad e = 29.\]
Then, on the one hand, $\log(a) \geq 3 \cdot 2^k \cdot \log(6)$ holds; and, on the other hand, $\rad(a \cdot b \cdot c \cdot d \cdot e)$ is a factor of
${(6^{2^k}+1) \cdot (6^{2^k}-1) \cdot 6 \cdot 31 \cdot 29}$, so that  its
logarithm must be bounded by $2 \cdot 2^k \cdot \log(6) + \ell$ for some constant~$\ell$. Thus,
\[
   q(a,b,c,d,e) \geq 
   \frac{3 \cdot 2^k \cdot \log(6)}{2 \cdot 2^k \cdot \log(6) + \ell},
\]
which converges to $\nicefrac32$ for $k \rightarrow \infty$.

\medskip
\noindent
We claim that for every $k\geq 1$, if $a,b,c,d,e$ are chosen as above, then 
they are pairwise coprime. If we write $s = 6^{2^k}$, then $a$, $b$ and $c$ are of the forms $(s+1)^3$, $-(s-1)^3$ and $-6s^2$,
respectively. Trivially, $s-1$ and~$s$ are coprime, and the same holds for $s$ and $s+1$. As $2$ and $3$ are
the only factors of $s$, neither of them can be a factor of $s-1$ or
$s+1$, and thus $(s+1)^3$ and $6s^2$, as well as $(s-1)^3$ and $6s^2$,
are coprime. As $s-1$ and~$s+1$ are both odd, they cannot have $2$ as a common factor, and thus $s-1$ and $s+1$ must be coprime; consequently, $(s-1)^3$ and $(s+1)^3$ are coprime. To complete the argument, consider the sequence $(6^{2^k})_{k \geq 1}$; if we can show that, modulo $29$ and modulo $31$, none of its elements
equals $-1$, $0$, or $1$, then none of $s-1$, $s$ or $s+1$ can be a 
multiple of $29$ or $31$, implying that each of $a,b,c$ is coprime
with both $d=29$ and $e=31$. We proceed by 
repeated squaring; first we obtain  
\[\begin{array}{l@{\;}c@{\;}c@{\;}c@{\;}r@{\;}c@{\;}r@{\;}l}
6&&&&&\equiv &	6    & \pmod{29}   \\
	6^2    &&& =      & 36     & \equiv & 7                & \pmod{29} \\
	6^4    & \equiv & 7^2    & =      & 49 & \equiv & -9           & \pmod{29} \\
	6^8    & \equiv & (-9)^2 & =      & 81 & \equiv & -6          & \pmod{29} \\
	6^{16} & \equiv & (-6)^2 & =      & 36 & \equiv & 7 &\pmod{29}, 
\end{array}\]
and so on. Similarly, modulo $31$, we obtain the sequence $6$, $5$,
$-6$, $5$, and so on. Thus, $a,b,c,d,e$ are pairwise coprime,
establishing condition~(i) of Definition~\ref{def:U}.

Condition~(ii) is immediate.
For condition~(iii), assume that there exist non-trivial subsums equaling~$0$ and fix one. Clearly, no combination of only the elements $c$,~$d$ and~$e$ exists that sums to~$0$. Thus at least one of $a$ or $b$ must occur in our subsum. But if
${\pm (s+1)^3}$ is part of the subsum, so must ${\mp (s-1)^3}$, as otherwise there would be no hope of the subsum equaling $0$. Also, the signs of these two numbers must clearly be opposite; w.l.o.g.~assume that they are chosen in such a way that the sum of these two elements is positive, that is, that it equals $6 \cdot s^2+2$. Then $-6s^2$ must also be part of the subsum in order to have any hope of achieving a subsum equaling $0$. But 
$ {(s+1)^3 - (s-1)^3 -6s^2 = 2}$, and thus the only way to achieve a sum of $0$ in this case is by also including $29$ and $-31$. Thus all five of  $a,b,c,d,e$ are required in a subsum for it to equal~$0$; this contradicts our assumption that our subsum was a non-trivial example.

Finally, condition~(iv) of Definition~\ref{def:U} is vacuous as ${F = \emptyset}$. 
\end{proof}

\noindent
To obtain the stronger Theorem~\ref{th:first} stated in the introduction, we will use a proof that is similar to the previous one, except that we employ a degree $5$ polynomial instead of a degree $3$ one to obtain a better
bound. We begin by proving an auxiliary result. 

\begin{lem} \label{pr:factoravoidance}
	Let \br{$u,m \in \mathbb Z$ with ${u < 0 < m}$ and ${m \geq \max(2,|u|)}$,} write 
	\[ q=\prod_{p \leq
		m \,\wedge\, p \text{ prime}} \, p. \] 
		and let ${F = \{3,4,\ldots,m\}}$.
Then there are a natural number ${v > 0}$ and an odd integer~${w \leq 0}$ 
	with ${u=v+w}$ such that
	\begin{itemize}
		\item ${q < v \le |w| \leq (m+1) \cdot q}$,
		\item ${\gcd(v,w)=1}$, and
		\item no element of
		$F$ divides $v$ or $w$.
	\end{itemize}
\end{lem}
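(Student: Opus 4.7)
The plan is to translate the assertion into finding a single integer $v$ in a suitable interval subject to congruence conditions, apply CRT to exhibit an admissible residue class, and then use a length count to produce a representative. Writing $w = u - v$ and using ${v > 0 > u}$, so $w < 0$ is automatic, we obtain ${|w| = v + |u|}$, and the three size conditions ${q < v \leq |w| \leq (m+1)q}$ collapse to
\[ v \in I := (q, (m+1)q - |u|], \]
an interval of length ${mq - |u| \geq m(q-1)}$. The remaining requirements on $v$ are (a)~$v + |u|$ odd (so that $w$ is odd); (b)~no $k \in F$ divides $v$ or $v + |u|$; and (c)~$\gcd(v, w) = 1$.

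I would unpack (b) into prime-level conditions. Since (a) makes $v + |u|$ odd, no power of $2$ can divide $v + |u|$, so the constraint on $v + |u|$ reduces to ${v \not\equiv -|u| \pmod p}$ for each odd prime ${p \leq m}$. The constraint on $v$ forbids ${v \equiv 0 \pmod p}$ for each such $p$, and additionally ${v \not\equiv 0 \pmod 4}$ when ${m \geq 4}$ (since $4 \in F$ then). Together with (a), which forces $v$ to have parity opposite to $u$, these are finitely many residue restrictions: at most two residues are forbidden per odd prime ${p \geq 3}$, and the parity and mod-$4$ conditions each admit at least one class. By CRT there is an admissible residue class modulo $M$, where ${M = q}$ except in the single sub-case "$u$ odd and ${m \geq 4}$" where ${M = 2q}$. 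A short check shows ${m(q-1) \geq M}$ in every case: for ${M = q}$ this is ${q(m-1) \geq m}$, automatic for ${m, q \geq 2}$; for ${M = 2q}$ it is ${q(m-2) \geq m}$, which holds because ${m \geq 4}$ forces ${q \geq 6}$. Hence $I$ contains a full period mod $M$ and so an admissible $v$.

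Finally, (c) is automatic from the construction: any common divisor of $v$ and $w$ divides ${v + w = u}$, so its prime factors all satisfy ${p \leq |u| \leq m}$. If $u$ is even, $v$ is coprime to every prime ${\leq m}$; if $u$ is odd, the only prime ${\leq m}$ that can divide $v$ is $2$, which cannot divide the odd $u$. Either way $\gcd(v, u) = 1$ and so $\gcd(v, w) = 1$. The main piece of bookkeeping I anticipate is tracking the distinction between ${M = q}$ and ${M = 2q}$ across the parity cases and verifying the length inequality uniformly; these are the only points where care is needed.
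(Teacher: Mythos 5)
Your argument is correct, and it takes a genuinely different route from the paper's. The paper gives an \emph{explicit greedy construction}: it initializes $v = u+1+q$, $w = -q-1$, then iterates over odd primes $p \leq m$, repeatedly shifting $(v,w)$ by $(\pm q/p)$ until $p$ divides neither, and finally shifts by $\pm q$ to kill divisibility by $4$; the invariants $v+w=u$ and $w$ odd are preserved because each shift is even and sums to zero, and coprimality of $v,w$ then follows exactly as in your last step. Your proof instead packages the same congruence constraints as a single CRT existence statement (one admissible class modulo $M\in\{q,2q\}$) and then uses a length count on the interval $(q,\,(m+1)q-|u|\,]$ to exhibit a representative. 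The trade-offs: the paper's version is fully constructive and keeps tight pointwise control over how far $v$ drifts, which the authors find convenient to reuse in Lemma~\ref{lem:y}; your version is shorter and isolates the combinatorial core (at most two forbidden residues per odd prime $p\ge 3$, one admissible class mod~$2$ or mod~$4$), but it pays the price of the $M=q$ vs.\ $M=2q$ case split, which you track correctly. Two small points worth making explicit when writing this up: $v\le |w|$ is not one of the interval constraints but follows from $|w|=v+|u|$ with $|u|\ge 1$; and the lower bound $mq-|u|\ge m(q-1)$ uses the hypothesis $|u|\le m$, which is the same hypothesis that later guarantees $\gcd(v,u)=1$ in your coprimality step.
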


\begin{proof}
	Let $q$ be as in the statement. 
	We run the following algorithm:
	
	\smallskip
	
	\begin{samepage}
		\begin{enumerate}[\qquad(1.)~\mbox{ }]
			\item Let $v = u+1+q$ and $w=-q-1$.
			\item For all prime numbers $3 \leq p \leq m$,
			\item \quad
			while $p$ divides one of $v$ or $w$, 
			\item \quad \quad 
			let $v = v+q/p$ and $w = w-q/p$.
			\item If $4$ divides $v$ then let $v = v+q$ and $w = w-q$.
		\end{enumerate}
	\end{samepage}
	
	\medskip
	\noindent
	Note that the sum $v+w=u$ and the fact that $w$ is odd are invariants
	during the execution of this algorithm. Further note that ${q < v}$ and ${|w| \leq (m+1) \cdot q}$ are immediate by construction.
	
	During the ``for'' loop over $p$, since $q/p$ is not a multiple of
	$p$, only one of~$v$, $v+q/p$, or $v+q/p+q/p$ 
	can be a multiple of $p$. The same applies to~$w$,
	$w-q/p$, and $w-q/p-q/p$. Thus, for each $p$, the instruction inside
	the ``while'' loop will be executed $0$, $1$ or~$2$~times, and
	afterwards neither $v$ nor $w$ will be divisible by~$p$.
	
	We claim that, once established, this property is preserved throughout
	the rest of the algorithm: Consider some prime $p' \neq p$
	which was handled in a previous iteration of the ``while'' loop, and
	assume that at the beginning of the iteration for~$p$ 
	we have that neither $v$ nor $w$ are divisible by $p'$.
	Since $q/p$ is a multiple of~$p'$, we have $v \equiv v + q/p \mod{p'}$
	and $w \equiv w - q/p \mod{p'}$; thus the property is preserved by the action taken at line~(4.). For similar reasons, the property
	also is preserved during the final execution of~(5.). This
	proves the claim, and it follows that after the algorithm terminates,
	$v$ and $w$ are not divisible by any odd prime less than or equal to~$m$.
	
	Assume that  $v$ is divisible by $4$ before the execution of line~(5.).
	Then, since $q$ is
	not divisible by $4$, $v+q$ is an even number {\em not} divisible
	by~$4$. Thus, in any case, after the execution of line~(5.), $v$ is
	not divisible by~$4$. Since $w$ was odd, it is still odd after the
	execution of line~(5.); in particular it is not divisible by~$4$.
	
	Overall we have established that, when the algorithm terminates, none of
	the numbers $3,4,\ldots,m$ divide $v$ or $w$.
	
	\medskip
	\noindent
	To see that $v$ and $w$ are coprime, first note that $2$ cannot be a
	common prime factor since $w$~is odd. By construction, any odd common
	prime factor $p$ of $v$ and $w$ must be larger than $m$. But any such
	$p$ also is a prime factor of $u=v+w$, which is impossible as $u \leq m$.
	
	\br{Finally, since ${v + w = u}$ and ${u < 0}$ it is obvious that ${v \le |w|}$.} 
\end{proof}

\noindent
With this established, we are ready to prove the first main result of this article. We point out that it is closely related to an observation of Ramaekers~\cite[Section 4.4]{Ram09}; he gives credit for the idea to use polynomial identities to the previously mentioned examples of
Darmon and Granville~\cite{DG95} and Elkies.
For these, the condition that the~$a_i$'s have to be pairwise coprime is dropped; see~Remark~\ref{rem:halb}. 

\begin{thmn}[\ref*{th:first}]
	Let $F$ be such that 
	$2,5,10 \notin F$. Then ${Q_{U(F,n)} \geq \nicefrac53}$ for each odd ${n \geq 5}$. In particular, ${Q_{U(\emptyset,n)} \geq \nicefrac53}$ for these $n$. 
\end{thmn}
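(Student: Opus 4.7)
The approach mirrors the warmup proof of Theorem~\ref{th:konyagin} with the identity
\[(s+1)^{5}-(s-1)^{5}=10 s^{4}+20 s^{2}+2\]
replacing its degree-$3$ counterpart. I would fix a sequence $s=s(k)\to\infty$ with $\rad(s)=O(1)$ (modelled on $s=6^{2^{k}}$, multiplied if needed by a small constant so that $s\not\equiv 0,\pm 1\pmod p$ for every $p\in F\cup\{5\}$; these are finitely many congruences, jointly satisfiable along a subsequence) and then set $a_{1}=(s+1)^{5}$ and $a_{2}=-(s-1)^{5}$. Since $s$ is even, $s\pm 1$ are odd and coprime, so $a_{1}$ and $a_{2}$ are pairwise coprime; together they give the maximum $(s+1)^{5}\sim s^{5}$ and contribute $\log(s+1)+\log(s-1)\sim 2\log s$ to the total radical.

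The crux is then to build the remaining $n-2$ entries so that they sum to $-(10s^{4}+20s^{2}+2)$, are pairwise coprime (and coprime to $a_{1},a_{2}$), avoid every divisor in~$F$, satisfy Definition~\ref{def:U}(iii), and contribute only $\sim\log s$ to the radical (this last is what yields $\nicefrac{5}{3}$ rather than the $\nicefrac{5}{4}$ of Theorem~\ref{th:third}). I would dedicate one middle entry to absorbing the bulk $-10s^{4}$ as a smooth multiple of $s^{4}$ (giving radical $O(1)$, since $\rad(s)=O(1)$) and distribute the residual $-(20s^{2}+2)$ between two further pairwise coprime middle entries whose prime supports are disjoint from $\{2,5\}\cup\{p:p\mid s\}$. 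Finally I would apply Lemma~\ref{pr:factoravoidance} with a small constant~$u$ to produce two coprime integers $v,w$ avoiding every divisor in~$F$, adjusting the sum to exactly zero. For odd $n>5$, further invocations of the lemma split additional constants into more pairwise coprime pieces without changing the asymptotic quality.

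The verifications then parallel those of the warmup: pairwise coprimality is a finite check of modular conditions on~$s$, by repeated squaring (as in the $\bmod\,29,\,31$ computation there); the strong subsum condition holds because any vanishing subsum must include both $a_{1}$ and $a_{2}$ with opposite signs to cancel the $\pm s^{5}$ contribution, after which the successive scales $s^{4},s^{2},1$ force the remaining entries in turn; and condition~(iv) follows from Lemma~\ref{pr:factoravoidance} together with the hypothesis $2,5,10\notin F$. A direct bound then gives $\log\max=5\log s+O(1)$ and $\log\rad(\prod_{i} a_{i})\le 3\log s+O(1)$, so $q(a)\to\nicefrac{5}{3}$. The main obstacle is the splitting of $-(20s^{2}+2)$ into coprime pieces of combined radical $O(s)$: naive decompositions either clash with the $-10s^{4}$ term on the factor~$s^{2}$ or reintroduce an irreducible polynomial factor of radical $\sim s^{2}$ (such as $s^{2}+2$), which would give only $\nicefrac{5}{4}$. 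Overcoming this by carefully distributing the prime factors of $s$ and of the numerical coefficients across the middle entries is the step where the hypothesis that $n$ is odd is essential, and where the improvement over Theorem~\ref{th:third} actually occurs.
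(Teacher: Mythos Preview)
Your proposal has a genuine gap at precisely the point you flag as ``the main obstacle'': you do not actually overcome it. With the decomposition $a_1=(s+1)^5$, $a_2=-(s-1)^5$, and a third entry $-10s^4$, the residual is $-(20s^2+2)=-2(10s^2+1)$, and for $s$ with $\rad(s)=O(1)$ (such as $s=6^{2^k}$) the integer $10s^2+1$ is coprime to every prime dividing $10s$ and is generically close to squarefree of size $\sim s^2$. No ``redistribution of the prime factors of $s$ and of the numerical coefficients'' can help here, because those primes simply do not divide $10s^2+1$. Hence any entries carrying this residual contribute $\sim 2\log s$ to the log-radical, and you end up with $\log\rad\sim 4\log s$ and quality $\nicefrac54$, not $\nicefrac53$. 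Your remark that oddness of $n$ is ``essential'' at this step is also misplaced; parity of $n$ plays no role in the radical estimate.

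The paper resolves the difficulty with two ideas you are missing. First, it regroups the degree-$5$ identity as
\[(x-1)^5+10(x^2+1)^2-(x+1)^5=8,\]
so the middle term is $10$ times a \emph{perfect square} of a quadratic and the residual is the constant~$8$, which is then absorbed by $a_4,\ldots,a_n$ via Lemma~\ref{pr:factoravoidance} (oddness of $n$ enters only here, to keep all of $a_4,\ldots,a_n$ odd). Second---and this is the decisive trick---even $(x^2+1)^2$ still has radical $\sim x^2$ for generic~$x$; the paper forces $x^2+1$ itself to be a constant times a square by taking $x=ys$ with fixed $y$ and $(s,t)$ ranging over the infinitely many solutions of the Pell equation $y^2s^2-(y^2+1)t^2=-1$, so that $x^2+1=(y^2+1)t^2$ and $\rad(a_2)\mid 10(y^2+1)\,t=O(x)$. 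In particular $x$ does \emph{not} have bounded radical; the warmup's $6^{2^k}$ template, which your proposal extrapolates from, is abandoned entirely.
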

\begin{proof}
We will construct infinitely many $n$-tuples $(a_1,\ldots,a_n)$ where 
\begin{itemize}
	\item $a_1=(x-1)^5$, 
	\item $a_2=10(x^2+1)^2$,
	\item $a_3=-(x+1)^5$. 
\end{itemize}
We will then show that there exist choices for $a_4,\ldots,a_n$ that only depend on $n$ and such that there are infinitely many~$x$ such that these $n$-tuples satisfy the conditions posited by Definition~\ref{def:U}. We begin by letting 
\[\wa_4 = \begin{cases}
	24 & \text{if } F = \emptyset,\\
	3 \cdot (8+\max(F)) & \text{otherwise.}
\end{cases}\]
For ${i = 4,5,\ldots,n-2}$ we proceed inductively by letting each
$a_i$ be a prime number larger than $\wa_i$ and by letting each ${\wa_{i+1} = 3 \cdot a_i}$. 

Next, we let $a_{n-1}$ and $a_n$ be the numbers $v$ and $w$ provided by Lemma~\ref{pr:factoravoidance} when applied with parameters
\begin{itemize}
	\item ${u = -(8+a_4+a_5+\ldots+a_{n-2})}$
	\item ${m = \wa_{n-2}}$;
\end{itemize}
in particular  ${a_{n-1} > 0}$. Finally, let ${\wa_n = 3 \cdot (|a_{n-1}|+|a_n|)}$.

\br{Note that
${(x-1)^5 + 10(x^2+1)^2 -(x+1)^5 = 8}$ holds independently of the
choice of~$x$; thus by choice of $u$ we have ${a_1+a_2+\ldots+a_n = 0}$. Recall that $n$ is odd by assumption. As~${a_4+a_5+\ldots+a_{n-2}}$ is composed of an even number of all odd summands, $u$ must be even.} Therefore $a_{n-1}$~and~$a_n$ must have the same parity; however, by Lemma~\ref{pr:factoravoidance} they cannot both be even. Thus it follows that all of ${a_4,a_5,\ldots,a_n}$ are odd; moreover, they are pairwise coprime by construction. 

\medskip
\noindent
Set ${y = \wa_n!}$ and consider the equation
\begin{align} \label{eq:Pell} 
y^2 \cdot s^2 - (y^2+1) \cdot t^2 = -1. 
\end{align} 
As there is an initial solution ${(s, t) = (1,1)}$ and as ${y^2 \cdot (y^2+1)}$ is positive and not a square, it follows that equation \eqref{eq:Pell}~has infinitely many integer solutions (see, for instance, Bundschuh~\cite[Subsection~4.3.7, page~198]{bundschuh}). 
Fix any solution $(s,t)$ of equation~\eqref{eq:Pell} and let ${x = y \cdot s}$.

Thus $x$ is a multiple of each element of $F$ and of each of~${a_4,a_5,\ldots,a_n}$; and therefore ${x-1}$, ${x+1}$ and ${x^2+1}$ are each coprime with any of these numbers. Furthermore, each of 
$2$, $5$, and $10$ is coprime with each of ${a_4,a_5,\ldots,a_n}$;
as a result ${10 (x^2+1)^2}$ is coprime with these numbers as well.
As $x$ is even, ${(x-1)^5}$ and~${(x+1)^5}$ are coprime and
${(x^2+1)}$ is coprime with ${x^2-1}$ and thus with ${x-1}$ and ${x+1}$ as well. As $10$ divides $x$, the numbers ${x-1}$, ${x+1}$ and ${x^2+1}$ are coprime with $10$. Also, no element of $F$ divides any of ${a_1,\ldots,a_n}$. 

In summary, conditions~(i),~(ii), and~(iv) in Definition~\ref{def:U} are satisfied. Now assume there exists a non-trivial zero subsum, that is, that there are ${b_1, \ldots, b_n}$ such that ${b_1 \cdot a_1 + \ldots + b_n \cdot a_n}=0$ and such that not all $b_i$'s equal $0$. We distinguish two cases:

If $b_1,b_2,b_3$ are not all equal, then $(b_1, b_2, b_3)$ or $(-b_1, -b_2, -b_3)$ must equal one of 
\[
\begin{array}{ccccc}
	(1,0,0), & (0,1,0),  & (0,0,1),  & (1,0,1),  & (1,0,-1),  \\
	(1,1,0), & (1,-1,0), & (1,-1,1), & (1,1,-1), & (1,-1,-1).
\end{array} 
\]
Recalling that $x$ is a multiple of $y$, it is easy to verify that in each of these cases we have  
$|b_1 \cdot a_1 + b_2 \cdot a_2 + b_3 \cdot a_3|>y$. But then, since their absolute values are too small compared with~${y = \wa_n!}$, no combination of the remaining~$a_i$ with ${i \geq 4}$ is possible that would lead to a zero subsum.

In the other case, if $b_1=b_2=b_3$, then their sum is either $-8$ or $0$ or $+8$. We distinguish all three possible cases concerning the value of $b_n$:
\begin{itemize}
	\item {\em If $b_n=0$, then the subsum is empty:}
	This is because in the sequence
	\[|a_1+a_2+a_3|,|a_4|,|a_5|,\ldots,|a_{n-1}|\]
	each entry is at least $3$ times larger than the previous one; thus the only way of obtaining a zero subsum in this case is when ${b_k=0}$ for all ${1\leq k \leq n}$.

	\item {\em If $b_n=1$, then $b_k=1$ for all ${1\leq k \leq n}$:}
	Assume that for some choice of $(b_k)_{1\leq k \leq n}$ with ${b_n=1}$ we have ${\sum_{k=1}^n b_k \cdot a_k = 0}$. Since we also have ${\sum_{k=1}^n a_k = 0}$ it follows that
	\[\begin{array}{c@{\;}l}
		&\sum_{k=1}^n a_k - \sum_{k=1}^n b_k \cdot a_k \\[0.5em]
		=& (1-b_1) \cdot (a_1+a_2+a_3)+\sum_{k=4}^{n-1} (1-b_k) \cdot a_k\\[0.5em]
		=& 0,
	\end{array}
	\]
where $1-b_k \in \{0,1,2\}$ for $k \in \{1,4,5,\ldots, n-1\}$. For the same reason as in the previous item, the only choice of $(1-b_k)_{k \in \{1,4,5,\ldots, n-1\}}$ that makes this equality true is $1-b_k =0 $ (thus $b_k =1 $) for all $k \in \{1,4,5,\ldots, n-1\}$.

	\item {\em If $b_n=-1$, then $b_k=-1$ for all $1\leq k \leq n$,} by a symmetric argument.
\end{itemize}

\noindent
In summary, the subsum condition~(iii) in Definition~\ref{def:U} is satisfied as well. 

It remains to estimate the qualities of the \rr{constructed} $n$-tuples. Note that the terms~$y$ and~$y^2+1$ as well as the terms~$a_4,\ldots,a_n$ are constant, and that by equation~\eqref{eq:Pell} the term ${a_2 = 10 (x^2+1)^2 = 10 (y^2+1)^2 \cdot t^4}$ only contributes a factor $t \in O(x)$ to the radical. Thus we have
$\rad(a_1 \cdot \ldots \cdot a_n) \in  {O(x^3)}$.

On the other hand, $\max\{|a_1|,\ldots,|a_n|\} = |a_3| = |x+1|^5$, and thus there is a constant $C$ such that we have
\[
q(a_1,\ldots,a_n) \geq 
\frac{\log(x+1)^5}{\log(Cx^3)}, 
\]
and therefore
\[
Q_{U(F,n)} \geq \lim_{x \rightarrow \infty} q(a_1,\ldots,a_n) \geq \lim_{x \rightarrow \infty} \frac{\log(x^5)}{\log(x^3) + \log(C)} = \nicefrac53.
\]
This completes the proof.
\end{proof}

\noindent
The above result only holds for $F$ not containing $2$, $5$ or $10$. In case we do allow $5$~or~$10$ in $F$, we can still obtain the following weaker lower bound by considering polynomials whose degree depends on $F$.

\begin{thm} \label{satz_polynom_begr} 
	Let $F$ be a finite set with $\min(F) \geq 3$. Then
	${Q_{U(F,5)} > 1}$.
\end{thm}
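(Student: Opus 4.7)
The plan is to adapt the construction in the proof of Theorem~\ref{th:first} by replacing the polynomial degree $5$ with an odd prime $p > \max F$ chosen depending on $F$. The coefficient $2p$ (the analog of the constant $10$ in the identity ${(x-1)^5 + 10(x^2+1)^2 - (x+1)^5 = 8}$) then has prime factors in $\{2, p\}$, a set disjoint from $F$. I consider 5-tuples of the shape
\[
a_1 = (x-1)^p,\qquad a_2 = 2p\,(x^2+1)^{(p-1)/2},\qquad a_3 = -(x+1)^p,
\]
with $a_4, a_5$ determined by the requirement $a_4 + a_5 = R_p(x)$, where
\[
R_p(x) := (x+1)^p - (x-1)^p - 2p\,(x^2+1)^{(p-1)/2}
\]
is a polynomial residue of degree $p-3$ in $x$ (identically equal to $-8$ when $p = 5$, recovering Theorem~\ref{th:first}).

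Following the strategy of Theorem~\ref{th:first}, I would restrict $x$ to a Pell-type sequence of the form $x = ys$ with $y$ a suitably large factorial depending on $p$ and $F$, and $(s, t)$ satisfying $y^2 s^2 - (y^2+1) t^2 = -1$. This ensures $\rad(x^2+1) = O(x)$, hence $\rad(a_2) = O(x)$, and simultaneously provides a reservoir of CRT-style congruence freedom on $x$ to enforce pairwise coprimality and $F$-avoidance of the tuple.

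The crux is choosing a decomposition $R_p(x) = a_4 + a_5$ that satisfies (i) pairwise coprimality of $a_1,\ldots,a_5$; (ii) none divisible by elements of $F$; (iii) the subsum condition of Definition~\ref{def:U}; (iv) both $\rad(a_4)$ and $\rad(a_5)$ small. The natural candidate is to take $a_4 \approx C x^{p-3}$ (a monomial times a constant $C$ close to the leading coefficient of $R_p$, but arranged to be coprime to $2p$ and to the primes in $F$), with $a_5$ absorbing the remainder; this gives $\rad(a_4) = O(x)$, and a comparable bound on $\rad(a_5)$ when the remainder is nearly constant. Such a $C$ exists because $F$ is finite and any required lattice of congruence conditions can be met by $x$'s in the Pell sequence. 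The subsum condition~(iii) is verified by a size-comparison argument entirely analogous to the proof of Theorem~\ref{th:first}: since $|a_1|, |a_3| \sim x^p$ dominate, any nontrivial $\{-1,0,1\}$-combination equaling zero would have to include both $\pm a_1$ and $\mp a_3$, which in turn forces the full trivial combination. The quality estimate then yields $\max_i|a_i| = O(x^p)$ and $\prod_i \rad(a_i) = O(x^c)$ for some $c$ strictly less than $p$, giving $q(a)\to p/c>1$.

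The main obstacle is balancing all the constraints in the decomposition simultaneously: $F$-avoidance restricts $x$ to certain congruence classes that may conflict with the coprimality conditions; coprimality of $a_2$ and $a_4$ requires $C$ to be chosen carefully so as not to share factors with $2p$; and the small-radical condition forces $a_4$ to be essentially a monomial, pushing the constant adjustment into $a_5$. For small $p$ (e.g., $p = 7$, sufficient when $\max F < 7$), the residue $R_p(x)$ is particularly simple and the decomposition is direct; for larger $F$ one chooses a correspondingly larger prime $p$ and arranges the decomposition by refining the coefficient analysis, verifying case by case that such a $p$ always exists.
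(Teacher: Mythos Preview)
Your proposal has a structural gap. You correctly need $x$ even (else $a_1=(x-1)^p$ and $a_3=-(x+1)^p$ share the factor $2$), and your factorial $y$ makes $x=ys$ divisible by $2p$. But then $a_4 = Cx^{p-3}$ is divisible by $2p$ regardless of $C$, and so is $a_2 = 2p(x^2+1)^{(p-1)/2}$; hence $\gcd(a_2,a_4)\geq 2p$. The obstruction lies in $x^{p-3}$, not in $C$ as you suggest. Shifting to $a_4 = C(x+c)^{p-3}$ with $c$ odd and coprime to $p$ removes that particular obstruction, but then to drop the degree of $a_5 = R_p(x) - a_4$ below $p-3$ you must take $C$ equal to the leading coefficient of $R_p$, namely $2\binom{p}{3}-p(p-1)=p(p-1)(p-5)/3$, which is itself a multiple of $2p$; so again $a_4$ and $a_2$ fail to be coprime. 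Any $C$ coprime to $2p$ leaves $a_5$ of full degree $p-3$, for which you have no bound on $\rad(a_5)$ better than $O(x^{p-3})$; the five radical degrees then sum to at least $p$, and the construction gives no quality bound exceeding~$1$. Your closing paragraph acknowledges the obstacle but defers it to ``case by case'' verification; the above shows the difficulty is uniform in $p$ and cannot be removed this way.

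The paper sidesteps the trap by abandoning both the Pell equation and the base $x^2+1$. It replaces $x^2+1$ by $x^2+(s-2)/3$, the shift being chosen precisely so that the $x^{s-3}$ coefficient in $(x+1)^s-(x-1)^s-2s\,(x^2+(s-2)/3)^{(s-1)/2}$ also cancels, leaving a residual of degree only $s-5$. It then takes $x=k!$, accepts that the middle term contributes $O(x^2)$ to the radical, places the entire degree-$(s-5)$ residual (minus a constant) into a single summand $a_4$, and takes the last summand $a_5=y$ to be a \emph{constant} selected via a CRT-type argument (Lemma~\ref{lem:y}) to guarantee all pairwise coprimalities. The radical degrees then sum to $1+1+2+(s-5)+0=s-1$, yielding quality $\to s/(s-1)>1$.
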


\begin{proof}
	As before, we may assume ${F=\{3,4,\ldots,m\}}$ for some~$m$. 
	Let ${s = h!-1}$ for ${h > 9m}$ and keep $h$ and $s$ constant during the
	remainder of the construction. Let~${x = k!}$ for some ${k>s}$;
	as in the previous construction, we will demonstrate that
	for sufficiently large~$k$ all required properties are satisfied.
	Then by letting $k$ go to infinity we obtain infinitely many examples
	that together witness the desired lower bound for $Q_{U(F,5)}$.
	
	We consider the following numbers; here, the choice of $a_1$, $a_2$, $a_3$ and $a_4+a_5$ follows Ramaekers~\cite[Section~4.4]{Ram09} but we then  additionally split $a_4+a_5$ into two summands:
	\begin{itemize}
		\item $a_1 = (x+1)^s$
		\item $a_2 = -(x-1)^s$
		\item $a_3 = -2s \cdot (x^2+(s-2)/3)^{(s-1)/2}$
		\item $a_4 = -(a_1+a_2+a_3+y)$ for some fixed odd ${y>s}$ that we choose below.
		\item $a_5 = y$
	\end{itemize}
	Note that, as a polynomial in $x$, we have that  $a_1+a_2$ is of degree $s-1$ and
	\textit{even}, that is, of the form $c_0 + c_2x^2+ c_4x^4 + c_6x^6+\ldots$\,. Similarly note that $a_1+a_2+a_3$ is an even polynomial in $x$ of degree~$s-5$.
	Finally note that, when dividing an even polynomial by a 
	polynomial of the form $x^2+c$, for some~$c \in \mathbb Z$, then the
	remainder is an integer not depending on $x$; if we write
	$z_0$, $z_1$ and $z_2$ for the remainders of $a_1+a_2+a_3$  modulo $x^2$, modulo $x^2-1$ and modulo $x^2+(s-2)/3$, respectively, 
	then the following auxiliary statement holds.
	\begin{lem} \label{lem:y} 
		We have that $6$ divides $z_0$ and that there exists an integer~$y$ such that 
		\begin{itemize}
			\item none of $y$, $y+z_0$, $y+z_1$, or $y+z_2$ has a prime factor~$q$
			where $$5 \leq q \leq (2s)^s+|z_0|+|z_1|+|z_2|,$$  
			\item neither 
			$y$ nor $y+z_0$ are divisible by $2$ or~$3$.
		\end{itemize}
	\end{lem} 
	\begin{proof}
		We achieve this by a similar method as in the
		proof of Lemma~\ref{pr:factoravoidance}:
		
		Let
		${b = (2s)^s+|z_0|+|z_1|+|z_2|}$, ${r = \prod_{q \leq b \,\wedge\, q \text{
					prime}}}$ and proceed as follows.
		
		\smallskip
		
		\begin{samepage}
			\begin{enumerate}[\qquad(1.)~\mbox{ }]
				\item Let $y=1$.
				\item For all primes $q$ with ${5 \leq q \leq b}$, 
				\item \quad replace $y$ by $\min(M \cap N)$ where 
				\[\begin{array}{l@{\;}c@{\;}l}
					\quad\quad M & = & \{y+i\cdot \nicefrac{r}{q}\colon\; 0 \leq i
					\leq 4\},                                             \\
					\quad\quad N & = & \{y'\colon\;  q \nmid y' \wedge q \nmid (y'+z_0) \wedge q
					\nmid (y'+z_1) \wedge q\nmid (y'+z_2)\}.
				\end{array}\] 
			\end{enumerate}
		\end{samepage}
		
		\smallskip
		\noindent Note that $q$ does not divide $\nicefrac{r}{q}$, and thus, for each 
		$$z \in \{y',y'+z_0,y'+z_1,y'+z_2\},$$ 
		at most one among 
		${z,z+\nicefrac{r}{q},\ldots,z+4 \cdot \nicefrac{r}{q}}$ can be a
		multiple of $q$. Thus, by the pigeonhole
		principle, the choice of $y$ in (3.) is always possible.
		
		\medskip
		\noindent
		That the final $y$ emerging from this process has the first of the two
		stipulated properties then follows from an argument analogous to that
		used in the proof of Lemma~\ref{pr:factoravoidance}.
		
		\smallskip
		\noindent
		To argue that $y$ and ${y + z_0}$ have the second property, we first prove that $z_0$
		is divisible by $6$. An 
		easy calculation shows that ${z_0=2-2s \cdot ((s-2)/3)^{(s-1)/2}}$, an
		even number. To see that ${z_0 \equiv 0 \mod 3}$, it is enough to show
		that 
		\[ 2s \cdot ((s-2)/3)^{(s-1)/2} \equiv 2 \mod 3.\] 
		To that end, note
		that, as~${h! \equiv 0 \mod 4}$, we have that ${s-1=h!-2 \equiv 2 \mod
			4}$, and thus that $(s-1)/2$ is odd. 
		Recall that ${s = h!-1}$, thus ${s \equiv 8 \mod 9}$. Now ${s-2 \equiv 6 \mod 9}$
		and ${(s-2)/3  \equiv 2 \mod 3}$. Moreover, ${2s \equiv 2 \cdot 2 \equiv 1 \mod 3}$. Therefore 
		$2s \cdot ((s-2)/3)^{(s-1)/2} \equiv 2 \mod 3$ and thus ${z_0 \equiv 0 \mod 6}$.
		
		\smallskip
		\noindent
		To complete the proof of the lemma, note that after the execution of~(1.) we have ${y + z_0 \equiv y \equiv  1 \mod 6}$. As all terms~$\nicefrac{r}{q}$
		appearing in the algorithm are multiples of $6$, this last property is
		invariant during the algorithm's execution, and the final $y$ and
		${y+z_0}$ are not divisible by $2$ or $3$.\phantom{\qedhere}\hfill$\Diamond$
	\end{proof} 

\noindent
	To continue with the proof of Theorem~\ref{satz_polynom_begr}, fix an integer $y$ as provided by Lemma~\ref{lem:y}; note that $y$ does not depend on $x$, a fact which will prove crucial in our closing arguments below. We verify conditions~(i)--(iv)
	stipulated by Definition~\ref{def:U}. Condition~(ii) is trivially satisfied by choice.
	
	By construction, $x$ is a multiple of $3$ while neither $s$ nor $(s-2)/3$ are multiples of~$3$ by the arguments given in the proof of Lemma~\ref{lem:y}; thus, $3$ does not divide $a_3=-2s \cdot (x^2+(s-2)/3)^{(s-1)/2}$. We further claim that $a_3$ is not divisible by~$4$ either; this is because $x$ is even, $s$ is odd, and $(s-2)/3$ is easily seen to be odd by construction. Now let ${q > 3}$ be a prime factor of any element of~$F$. By construction, $q$~divides~$x$ but neither $s$ nor ${(s-2)/3}$. It follows that none of ${x+1}$, ${x-1}$ and ${x^2-(s-2)/3}$ are  multiples of $q$. By Lemma~\ref{lem:y} neither $y$ nor ${y+z_0}$ are  divisible by $q$. Thus none of ${a_1,a_2,a_3,a_4,a_5}$ is a multiple
	of any element of $F$ and thus condition (iv) is satisfied. 
	
	Clearly, the fact that  $x$ is even implies that $a_1$ and $a_2$ are coprime by construction. Observe that ${(x^2+(s-2)/3) - (x^2-1) = \br{(s+1)/3 = h!/3}}$; this implies that if $x+1$ or $x-1$ have a common factor~$q$ with $a_3$, then $q$ must either divide~$2s$ or~\br{${(s+1)/3}$}. By construction, any such~$q$ also divides~$x$, which implies $q=1$. In summary we have that $a_1$, $a_2$ and $a_3$ are pairwise coprime.
	
	For sufficiently large~$k$ we have 
	\begin{align} \label{eq:yz1} k \geq 2s+|y|+|z_0+y|+|z_1+y|+|z_2+y|; 
	\end{align} 
	from now we assume that such a $k$ was chosen. Then a prime factor~$q$ of any of the summands in this inequality is also a factor of~${x = k!}$, and therefore not of $x-1$ or $x+1$. By Lemma~\ref{lem:y}, no prime
	factor $q$ of $y$, ${z_0+y}$, ${z_1+y}$, or ${z_2+y}$
	divides $2s$ or $(s-2)/3$ either. Altogether we obtain that no such~$q$  is a factor of $a_1$, $a_2$ or $a_3$, and therefore all  three must be coprime with~$a_5$. 
	
Next suppose that there exists a prime $q$ dividing both $a_3$ and $a_4$. As $a_4$ is odd, this would mean that 
	either $q$ divides $s$ or $q$ divides ${x^2+(s-2)/3}$. In the first case, $q$ would divide ${x = k!}$ since ${k > s}$. Therefore ${a_1 + a_2 + a_3 \equiv z_0 \mod{q}}$ and thus $a_4$ was congruent to ${-(z_0 + y)}$ modulo $q$. Since $q$ divides $a_4$ by assumption (and as we have already seen that $a_3$ is not divisible by 3), this would contradict the choice of~$y$ in Lemma~\ref{lem:y}. So suppose that $q$ divides ${x^2+(s-2)/3}$. Since ${a_4 \equiv -(z_2+y) \pmod{x^2+(s-2)/3}}$ 
	it would follow that $q$ is a prime factor of~${z_2 + y}$. In view of~\eqref{eq:yz1} this would imply that $q$~divides~${x = k!}$. But then $q$ would also divide $(s-2)/3$, which together with the fact that $q$ divides ${y + z_2}$ would again imply ${q \le 3}$. Since $q$ divides the odd $a_4$ and also $a_3$, which is not divisible by~$3$, this is impossible. 
	
	If a prime $q$ divides one of $a_1$ or $a_2$ then $q$ must also divide ${x^2-1}$. However, 
	\[a_4 \equiv -(z_1+y) \pmod{x^2-1};\]
	thus if $q$ would divide $a_4$ then it would also divide~${z_1+y}$. For $k$ large enough so that~\eqref{eq:yz1} holds, it would follow that $q$ divides~${x = k!}$, yielding a contradiction. 
	
Finally, we prove that $a_4$ and $a_5$ are coprime. First note that by~\eqref{eq:yz1} every prime factor of $a_5$ is a factor of $x$
	and, as ${a_4 \equiv 2s \cdot ((s-2)/3)^{(s-1)/2} \mod x}$,
	any common prime factor of $a_4$ and $a_5$ must be a factor of ${2s \cdot ((s-2)/3)^{(s-1)/2}}$. 
	But as we argued above, no prime factor of ${y = a_5}$ divides ${2s}$ or~${(s-2)/3}$. Thus $a_4$ and $a_5$ are coprime. In summary, condition (i) is satisfied.
	
	To see that condition (iii) is satisfied for all
	sufficiently large $k$, consider $a_1$, $a_2$, $a_3$ and~$a_4$ as polynomials in $x= k!$. In order for a subset of these numbers \br{or their negations} to sum to~$0$ all terms depending on $x$ need to be eliminated. To achieve this, if one of $a_1$ or $a_2$ is present in a subsum, \br{that is, if its coefficient is in $\{-1,1\}$,} the other clearly needs to be present \br{using the same coefficient} as well. First assume that they are both present; then their sum is of degree ${s-1}$; thus $a_3$ would be needed in the subsum as well \br{with a suitable coefficient taken from~$\{-1,1\}$}. \br{Regardless of the choice of coefficients,} the polynomials $a_1$, $a_2$ and $a_3$ cannot be combined in such a way as to produce a polynomial that is of degree less than $s-5$; which implies that we also need $a_4$. Finally, as $a_1+a_2+a_3+a_4=-y$ by definition, we also require $a_5$ in the subsum to make it equal~$0$. A similar argument applies if neither $a_1$ or $a_2$ are present in a subsum. We conclude that no non-trivial subsum can equal~$0$.

	\medskip
	\noindent
	We complete the proof by estimating the quality of~$(a_1,\ldots,a_5)$. We have that 
	\[\rad(a_1 \cdot \ldots \cdot a_5) \in  y \cdot O\left((x^2-1) \cdot (x^2+(s-2)/3) \cdot x^{s-5}\right)\!;\] 
	that is, using that $y$ is independent of~$x$, there exists a polynomial in $x$ of degree~${s-1}$ upper-bounding $\rad(a_1 \cdot \ldots \cdot a_5)$.
	
	Thus there is a constant $C$ such that for large enough $k$ we have
	\[
	q(a_1,\ldots,a_5) \geq 
	\frac{s \cdot \log(x+1)}{\log((x^2-1) \cdot (x^2+(s-2)/3) \cdot C x^{s-5} \cdot y)}, 
	\]
	and therefore, recalling that ${x = k!}$, 
	\[
	\lim_{k \rightarrow \infty} q(a_1,\ldots,a_5) \geq \lim_{k \rightarrow \infty} \frac{s \cdot \log(x)}{\log(x^4 \cdot x^{s-5} \cdot C')} = \frac{s}{s-1} > 1
	\]
	for some constant $C'$. We conclude that ${Q_{U(F,5)} > 1}$. 
\end{proof}

\noindent
Note that the value of $s$ in the proof depends on ${m = \max(F)}$ and therefore we cannot provide a fixed lower bound ${q > 1}$ working for any set $F$.

\section{The case of arbitrary ${n \ge 6}$} 

\noindent
The results obtained in the previous section concerned only odd~$n \geq 5$.
Here, we prove our next main result which holds true for general $n \geq 6$ and in particular refutes Ramaekers' conjecture for these~$n$.

\begin{thmn}[\ref*{th:third}] 
Let $n \geq 6$ and let $F$ be an arbitrary finite set. Then
\[Q_{U(F,n)} \geq \nicefrac54.\]
\end{thmn}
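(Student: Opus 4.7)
My plan is to construct infinitely many tuples in $U(F,n)$ whose qualities tend to $5/4$, based on the polynomial identity
\[(x-1)^5 + 10(x^2+1)^2 - (x+1)^5 = 8\]
that already underlies the proof of Theorem~\ref{th:first}. Its maximum summand $(x+1)^5$ has degree~$5$ in $x$, whereas the radical of the product of the three polynomial terms is a bounded constant times $(x-1)(x+1)(x^2+1)$, i.e.\ of degree~$4$ in $x$. This is already enough to drive the quality to $5/4$ in the limit; the real work is to extend this to an $n$-tuple for $n \geq 6$ respecting all conditions of Definition~\ref{def:U}.

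I would set $a_1 = (x-1)^5$, $a_2 = 10(x^2+1)^2$, $a_3 = -(x+1)^5$, and distribute the value $-8$ among further summands $a_4, \ldots, a_n$ so that the total sum is $0$. Following the strategy of Theorem~\ref{th:first}, I would take $a_4, \ldots, a_{n-2}$ to be primes chosen inductively so that each is much larger than its predecessors, and finish with $a_{n-1}, a_n$ obtained from Lemma~\ref{pr:factoravoidance} applied to $u = -8 - (a_4 + \cdots + a_{n-2})$ and a threshold~$m$ dominating all previously chosen quantities. The parameter $x$ would be taken as a large multiple of the product of every prime below the relevant threshold, so that $x \pm 1$ and $x^2+1$ are coprime to $F$ and to all auxiliary primes. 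The key contrast with Theorem~\ref{th:first} is that we do \emph{not} invoke the Pell equation: the factor $(x^2+1)^2$ in $a_2$ then contributes degree~$2$ (rather than degree~$1$) to the radical, and this is exactly what turns the quality $5/3$ of Theorem~\ref{th:first} into $5/4$.

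Two difficulties require extra care. First, a parity issue: since $a_2$ carries a single factor of $2$, the correction terms $a_4, \ldots, a_n$ must all be odd in order to be coprime with $a_2$; but for even $n$ the number $n-3$ of these terms is odd, and an odd number of odd summands cannot total the even value~$-8$. This is handled by a parity adjustment of $u$ (for instance by allowing one of the $a_i$ to play a slightly different role), ensuring that Lemma~\ref{pr:factoravoidance} returns two summands of the required parity. Second, if $5 \in F$ (or $10 \in F$) then $a_2 = 10(x^2+1)^2$ itself violates condition (iv); here one modifies the identity by absorbing the factor~$5$ into an auxiliary summand of bounded size (which is then easily kept out of $F$'s reach), at the price of a slightly more intricate polynomial identity whose combined radical is nonetheless still of degree $4$ in~$x$. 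The stronger subsum condition~(iii) is verified by the same size argument as in the proof of Theorem~\ref{th:first}: the polynomial terms dominate as $x \to \infty$ while the corrections remain bounded, so any non-trivial signed subsum would have to contain all three polynomial terms with equal signs, which rules out non-trivial cancellation. I expect the case $5 \in F$, and the associated bookkeeping required to preserve the $5/4$ quality, to be the main technical obstacle in the proof.
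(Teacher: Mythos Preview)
Your plan has two obstructions that are structural rather than technical, and neither of your proposed fixes overcomes them.

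For even~$n$ the parity problem is fatal within your setup. You must take $x$ even (otherwise $x-1$ and $x+1$ share the factor~$2$), and then $a_2=10(x^2+1)^2$ is the unique even entry among $a_1,a_2,a_3$. Pairwise coprimeness forces every correction term $a_4,\ldots,a_n$ to be odd, so the $n$-tuple contains exactly one even entry and $n-1$ odd ones; for $n$ even their sum is odd and can never vanish. No ``parity adjustment of~$u$'' helps, because the obstruction comes from coprimeness with~$a_2$, not from Lemma~\ref{pr:factoravoidance}. The factor~$5$ is equally rigid: any polynomial $A(x)$ with $(x-1)^5+A(x)-(x+1)^5$ constant has the shape $10x^4+20x^2+c$, and the only value of~$c$ for which $A$ is a constant times a square in~$\mathbb{Z}[x]$ (so that $\rad A(x)$ has degree~$2$) is $c=10$. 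Hence $A(x)=10(x^2+1)^2$ is forced, and its factor~$5$ cannot be shifted into a bounded auxiliary summand without destroying the repeated-factor structure that produces quality~$\nicefrac54$.

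The paper resolves both issues at once by passing to a four-term identity carrying an auxiliary parameter~$y$ built from~$F$:
\[ (x+y)^5-(x-y)^5-(10y-1)\,x^4-(x^2+10y^3)^2 \;=\; 2y^5-100y^6,\]
with $y$ a multiple of $\ell!$ for $\ell\ge\max F$ and $x=(y+1)^{h!}$. Then $y$ is even and $x$ is odd, so all four polynomial terms are odd, eliminating the parity obstruction; and since $x\equiv 1\pmod p$ for every prime $p\mid y$, each of the four terms is $\equiv\pm 1\pmod p$, so condition~(iv) holds for arbitrary~$F$. Choosing $x$ as a high power of $y+1$ also keeps $\rad(x^4)$ bounded, so the combined radical still has degree~$4$ in~$x$. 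The remaining entries $a_5,\ldots,a_n$ are then supplied by Lemma~\ref{pr:factoravoidance} together with $n-6$ large primes, along the lines you describe.
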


\begin{proof}
As enlarging $F$ only makes the statement harder to
prove, we can assume
that $F = \{3,4,\ldots,\ell\}$ for some $\ell \geq 11$. Let $s = \ell!$, fix a  ${t > 101}$, and let ${y = s \cdot t}$.
\begin{lem} \label{lemma:teilerfremd} 
	In the above setting, 
	\[\gcd(y+1, \br{10y} -1) = \gcd(y-1, \br{10y} -1) = \gcd(y+1, \br{10y} + 1) = 1.\] 
\end{lem} 
\begin{proof} 
Suppose that a prime~$p$ divides~${y+1}$. Then~${y \equiv -1 \pmod{p}}$ and therefore ${\br{10y} - 1\equiv -11 \pmod{p}}$. Then for ${p \ne 11}$ we clearly have ${p \nmid \gcd(y+1, \br{10y} -1)}$. On the other hand, since ${\ell \geq 11}$, we have that ${y = \ell! \cdot t \equiv 0 \pmod{11}}$, and thus $11$~isn't a divisor of ${y+1}$ either.

Analogously, if $p$ divides ${y-1}$ then ${y \equiv 1 \pmod{p}}$ and thus ${\br{10y} - 1 \equiv 9 \pmod{p}}$. Since ${p = 3}$ is a divisor of ${s = \ell!}$, we can conclude that ${\gcd(y-1, \br{10y} - 1) = 1}$.

Finally, if $p$ divides ${y+1}$ then ${10 y + 1 \equiv -9 \pmod{p}}$. Again, ${p = 3}$ is excluded by the choices made above, and thus ${\gcd(y+1, \br{10y} + 1) = 1}$.\phantom{\qedhere}\hfill$\Diamond$
\end{proof} 

\noindent
Note that there are infinitely many positive
integers $h_1$ such that 
\[{(y+1)^{h_1} \equiv 1 \mod{\br{10y} -1}}\]
as it suffices to choose $h_1$ as any multiple of the order of the coset of ${y+1}$ in the multiplicative group of the ring of residue classes modulo ${\br{10y} - 1}$. Analogously, there exist infinitely many integers~$h_2$ such that 
\[{(y+1)^{h_2} \equiv 1 \mod{\br{10y} +1}}.\]
Fixing some such $h_1$ and $h_2$, respectively, and letting $h$ be any integer greater than or equal to $\max(h_1, h_2)$, we have both
\[(y+1)^{h!} \equiv 1 \mod{\br{10y} -1} \quad \text{and} \quad (y+1)^{h!} \equiv 1 \mod{\br{10y} +1}.\]
Later, we will let $h$ go to infinity, but for the
moment we give an analysis that holds true independently of the exact
value of $h$ as long as it is sufficiently large.

\medskip
\noindent
So let $x=(y+1)^{h!}$. First note that since $y$ is even, $x$ is odd by definition. Secondly, it is clear that ${\gcd(x,y)=1}$ and in particular that
\begin{equation}\label{ex_dagger}
	\text{there is no } m \in {F \cup \{2\}} \text{ that divides } x.
\end{equation}
\begin{itemize} 
	\item We choose the first four entries of the $n$-tuple $(a_1,\ldots,a_n)$ as
\[
 a_1 = (x+y)^5,\quad
a_2 = -(x-y)^5,\]
\[a_3 = -(10 y - 1) \cdot x^4,\quad
 a_4 = -(x^2+\br{10y}^3)^2\!.
\]
\end{itemize} 
Of course we haven't fixed $h$ yet, so that the exact value of $x$ is
undetermined, and the same is consequently true for ${a_1,a_2,a_3, a_4}$. However,
we can already observe that 
\begin{equation}\label{ex_ddagger}
	a_1+a_2+a_3+a_4 = 2y^5 - 100 y^6
\end{equation}
and therefore that $a_1+a_2+a_3+a_4$ is independent of $x$.
We continue with the definition of $a_7,a_8,\ldots,a_n$ in a way that
does not depend on $x$, either:
\begin{itemize}
\item Let $a_7,a_8,\ldots,a_n$ be \br{negative} odd prime numbers
such that ${|a_7| > 200 y^6}$ and such that  ${|a_{k+1}| > 2 \cdot |a_{k}|}$ for ${k=7,8,\ldots,n-1}$. Then, using~\eqref{ex_ddagger},
$$|a_7| > 2 \cdot |a_1 + a_2 + a_3 + a_4|.$$ 
\end{itemize}
Finally, we need to fix the remaining two elements $a_5$ and $a_6$; by the
preceding choices and arguments the following definition is again
independent of $x$:
\begin{itemize}
\item Let \br{${u = a_1+a_2+a_3+a_4+(\sum_{k=7}^{n} a_k)}$}
and let \br{$m=-4u$}. By the previous choices, it is easy to see that $u$ must be a \br{negative} number. So it is possible to apply
Lemma~\ref{pr:factoravoidance} to $u$ and $m$
and let $a_5$ and $a_6$ be the numbers \br{$-v$ and $-w$} with $u=v+w$ as provided by that lemma.
\end{itemize}

\medskip
\noindent
We will show in a moment that, for every large enough $h$, the conditions in Definition~\ref{def:U} are met by $(a_1,\ldots,a_n)$. We
claim that this then implies that ${Q(F,n) \geq \nicefrac54}$; to see that, note that $\rad(a_1 \cdot \ldots \cdot a_n)$
will be a divisor of
\[
(x+y) \cdot (x-y) \cdot (\br{10y}-1)\cdot (y+1) \cdot (x^2+\br{10y}^3)
 \cdot a_5 \cdot \ldots \cdot a_n.\]
Letting $h$ go to infinity does not affect $a_5, a_6, \ldots, a_n$ at all.
Inside $a_1, a_2, a_3, a_4$, only~$x$~grows with~$h$ while all other terms remain constant. Thus, $\rad(a_1 \cdot \ldots
\cdot a_n)$ is bounded from above by a polynomial in $x$ of degree
at~most~$4$, while due
to the choice of~$a_1$ we have that $\max\{|a_1|,\ldots,|a_n|\}$ is bounded
from below by a
polynomial in~$x$ of degree~$5$. Therefore ${Q_{U(F,n)} \geq \nicefrac54}$.

\medskip
\noindent
It remains to show that, for all $h$ large enough, the four
conditions in Definition~\ref{def:U} are met by
$(a_1,\ldots,a_n)$. That condition~(ii) holds is immediate by the choice of~$a_5$ and~$a_6$. 

\br{If $p$ is an arbitrary prime factor of~$y$ then, since ${x \equiv 1 \mod{p}}$ by definition, it follows that each of $a_1$, $a_2$, $a_3$ and $a_4$ is congruent to~$\pm1$ modulo~$p$.} It follows that none of $a_1$, $a_2$, $a_3$, $a_4$ are divisible by any element of~$F$; and since the same is true for each of $a_5,a_6,\ldots,a_n$ by construction, condition~(iv) is satisfied.

Next, we establish condition (i) in several intermediate steps:
\begin{itemize}
\item {\em $a_1$ and $a_2$ are coprime:}
Note that any common prime divisor of $a_1$ and $a_2$ must also 
be a factor of $2y$, as it
must divide ${x+y}$ and ${x-y}$ and thus their difference.
Note that $y$ is even by construction, so that $y$ has the same prime
divisors as $2y$. Thus, any common prime divisor of $a_1$ and $a_2$
must also divide $y$ and, consequently, $x$.
But we already know that ${\gcd(x,y)=1}$.

\item {\em $a_3$ is coprime with both $a_1$ and $a_2$:} The factor $x$
of $a_3$ is coprime with $x+y$ and $x-y$, as $x$ is coprime with $y$.
Furthermore, 
\[ x=(y+1)^{h!} \equiv 1 \pmod{10 y -1}\]
by the choice of $h$, and thus 
\[\begin{array}{r@{\;}c@{\;}r@{}c}
	x+y & \equiv & 1+y  & \pmod{10  y -1}, \\
	x-y & \equiv & 1-y & \pmod{10 y -1}.
\end{array}
\] 
By Lemma~\ref{lemma:teilerfremd}, ${10  y -1}$ is coprime with both ${1+y}$ and ${1-y}$. 
Therefore $a_3$ is coprime with $a_1$ and $a_2$.

\item {\em $a_3$ and $a_4$ are coprime:}
We establish this by showing that 
$a_4$ is coprime with both factors of~$a_3$.
First, to determine $\gcd(10 y -1,a_4)$, note that 
$x  \equiv 1 \mod{10  y-1}$ and that 
\begin{align} \label{eq:100} 100 y^2 - 1 = (10 y -1) \cdot (10 y+1) \equiv 0 \mod{10  y -1}.\end{align} 
This implies 
$y^2+1 \equiv 101 y^2 \mod{10 y-1}$ and thus 
\[
\begin{array}{r@{\;}c@{\;}l@{}l}
	  x^2 + \br{10y}^3 & \equiv & 1 + \br{10y}^3               & \pmod{10 y-1}    \\
	              & =      & \multicolumn{2}{@{}l}{(\br{10y}-1) \cdot y^2+y^2+1 }                  \\
	              & \equiv & 101 y^2                 & \pmod{10 y-1}. 
\end{array}
\]
As $101$ is prime and ${10 y-1>101}$, they have no common factor.
Moreover, in view of~\eqref{eq:100}, any common factor of $y^2$ with $10 y-1$ would also have to be a
factor of $1$; as a result, $\gcd(10 y -1,a_4)=1$.

Secondly, we must determine 
\[ \gcd(x, a_4)=\gcd(x,x^2 + 10 y^3) = \gcd(x, \br{10y}^3).\] 
But by~\eqref{ex_dagger}, no divisor of $y$ nor any
element of ${F \cup \{2\}}$ divides $x$. Therefore, ${\gcd(x, a_4)=1}$. 

\item {\em $a_4$ is coprime with both $a_1$ and $a_2$:}
Clearly, $a_1 \cdot a_2$ is a power of 
\[ (x+y)(x-y)=x^2-y^2 \] while $a_4$
is a (negated) power of $x^2+10 y^3$. Any common prime factor~$p$ of~$a_4$ with either $a_1$ or $a_2$ would therefore have to be a factor
of the difference between these two expressions, that is, of 
${10 y^3+y^2 = y^2 \cdot (\br{10y}+1)}$. Such a~$p$ divides one of ${x+y}$ or
${x-y}$; thus, it cannot be a factor of $y$, because otherwise it would
divide $x$, contradicting the coprimeness of $x$ and~$y$.
Thus, such a $p$ would have to be a prime factor of ${\br{10y}+1}$.

Recall that $x$ was chosen such that ${x \equiv 1 \mod{\br{10y}+1}}$, 
thus we would have~${x \equiv 1 \mod{p}}$.
Since $p$ divides one of ${x+y}$ or ${x-y}$, it would also be a prime factor of
either 
\[ (\br{10y}+1) - 10\cdot (x+y) = -10x + 1 \equiv -9 \mod{p}\] or of
\[ (\br{10y}+1) + 10\cdot (x-y)= 10x + 1 \equiv 11  \mod{p}.\] 
This could only be true if $p\in\{3,11\}$, which is impossible since
both~$3$ and~$11$ divide $y$ and thus cannot divide $\br{10y}+1$. In
conclusion, $a_4$ is coprime with both $a_1$ and $a_2$.

\item {\em Each of $a_1,a_2,a_3$ is coprime with each of $a_5,a_6,\ldots,a_n$:}
By construction, $a_5$ and $a_6$ do not depend on $x$. For sufficiently large $h$ any prime factor of $a_5$ and $a_6$ is at most $h$.
Observe that for any prime $p$ with ${p \leq h}$ 
it holds that
$p-1$ divides $h!$, and thus, by Fermat's Little Theorem, 
\[{x=(y+1)^{h!} \equiv 1 \mod p}.\]
This holds in particular for any prime~$p$ dividing $a_5$ or $a_6$ and, if $h$ is large enough, for all  primes~${p\in\{a_7,a_8,\ldots,a_n\}}$; thus any such $p$ is coprime with~$x$. 
Moreover, we have 
\begin{eqnarray*} x + y & \equiv & 1 + y \mod{p}, \\ 
	x -y & \equiv & 1 - y \mod{p} \end{eqnarray*} 
for these primes $p$. Since for  such a~$p$ we also have ${p > 200y^6 > y \pm 1}$ by construction, it follows that $p$~is coprime with $x+y$ and $x-y$ as well. As we trivially have $\br{p \nmid (10 y-1)}$, we can conclude that $p$ 
does not divide any of~${a_1,a_2,a_3}$.

\item {\em $a_4$ is coprime with each of $a_5, a_6,\ldots, a_n$:} 
For the same reasons as in the previous item, we only need to consider potential
prime factors $p$ between ${200 y^6}$ and~$h$. For such $p$, we again have that 
$x \equiv 1 \mod{p}$. Then
\[x^2+\br{10y}^3 \equiv 1+\br{10y}^3 \not\equiv 0 \mod p,\]
which implies that $p$ does not divide $a_4$. 

\item {\em $a_5,a_6,\ldots,a_n$ are pairwise coprime:} 
First, being pairwise distinct primes, the numbers~$a_7,a_8\ldots,a_n$ are trivially pairwise coprime. Secondly, recall how $a_5$ and $a_6$ were defined using Lemma~\ref{pr:factoravoidance}
in such a way as to ensure that $a_5$ and $a_6$ are coprime with each other. Finally, Lemma~\ref{pr:factoravoidance} also guarantees that no primes
less than $m$ divide $a_5$ or $a_6$; and as \br{${m = -4u}$} is larger  than any of~$|a_i|$ for~${7 \leq i \leq n}$ \br{by the choice of $u$}, we have in particular that both of $a_5$ and $a_6$ are coprime with each of~${a_7,a_8,\ldots,a_n}$. 
\end{itemize}

\noindent
It remains to establish subsum condition~(iii) for
$(a_1,\ldots,a_n)$. Assume that we have fixed~$b_1,\ldots,b_n \in \{-1,0,+1\}$ such that
$\sum_{k=1}^n b_k \cdot a_k = 0$. We proceed via a series of claims:

\begin{itemize}
\item {\em It must hold that $b_1=b_2=b_3=b_4$:} 
Recall that $a_5,a_6\ldots,a_n$ do not depend on~$x$. Since $x = (y+1)^{h!}$,
this implies for $h$ large enough that
${x > |a_5|+|a_6|+\ldots+|a_n|}$. Thus, if for some choice of $(b_1,b_2,b_3,b_4)$ we have that $|\sum_{k=1}^4 b_k \cdot a_k|  > x$, then {\em no} choice of $(b_5,b_6,\ldots,b_n)$ can lead to ${\sum_{k=1}^n b_k \cdot a_k = 0}$. We will argue that this must be the case unless we have $b_1=b_2=b_3=b_4$.

So let us inspect all possible choices of $(b_1,b_2,b_3,b_4)$. We first exclude some trivial cases: First, if only one of $b_1,b_2,b_3,b_4$ is non-zero, then clearly $|\sum_{k=1}^4 b_k \cdot a_k| > x$. Secondly, we can omit choices of 
$(b_1,b_2,b_3,b_4)$ where the summands ${b_k \cdot a_k}$ are all positive or all negative. Finally, to further reduce the numbers of cases to consider, we assume w.l.o.g.\ that $b_i=1$ when $1 \leq i \leq 4$ is smallest with $b_i \neq 0$; the case $b_i=-1$ is symmetric. Then the following cases not satisfying $b_1=b_2=b_3=b_4$ remain:
\begin{itemize}
\item $a_1+a_2+ \br{b_3 \cdot a_3} +b_4 \cdot a_4$ where $(b_3,b_4)\neq(1,1)$,
\item $a_1-a_2+ \br{b_3 \cdot a_3} +b_4 \cdot a_4$ where ${b_3, b_4 \in \{-1,0,+1\}}$, 
\item $a_1+b_3 \cdot a_3 + b_4 \cdot a_4$ where ${b_3, b_4 \in \{-1,0,+1\}}$,
\item $a_2+b_3 \cdot a_3+b_4 \cdot a_4$ where ${b_3, b_4 \in \{-1,0,+1\}}$,
\item $a_3+b_4 \cdot a_4$ where ${b_4 \in \{-1,0,+1\}}$,
\end{itemize}
and the absolute values of all of these expressions are easily seen to be lower-bounded by~$x$.
\end{itemize}

\noindent
  With the preceding claim established, we can from now on treat 
$a_1+a_2+a_3+a_4$  as a \textit{single} number that can either be included in a subsum with positive or negative sign, or not.

\begin{itemize}
\item {\em It must hold that $b_5=b_6$:} Note that by the choice of $a_5$ and $a_6$ and by the properties ensured by Lemma~\ref{pr:factoravoidance} we have that $a_5>0$ and $a_6<0$ and that 
\[
\begin{array}{r@{\;}c@{\;}l}
	\br{|a_5|, a_6} &>&\Big|\phantom{|}a_1+a_2+a_3+a_4\phantom{|} + \sum_{k=7}^{n} \phantom{|}a_k\phantom{|}\Big|\\[0.8em]
	&=& \phantom{\Big|}	|a_1+a_2+a_3+a_4|+\sum_{k=7}^{n} |a_k|;
\end{array}
\]
here the equality uses the fact that $a_1+a_2+a_3+a_4$ is negative by~\eqref{ex_ddagger} while $a_7,a_8\ldots,a_n$ are negative by choice. As a consequence, in any subsum equaling zero, $a_5$ and $a_6$ must either not occur at all or in such a way that they partly cancel each other out additively. This is only possible when~${b_5=b_6}$.
\end{itemize}

\noindent
  Again, from now on we treat $a_5+a_6$ as a single number
that may be part of a subsum or not. To complete the proof we distinguish all three possible cases concerning the value of ${b_5=b_6}$.

\begin{itemize}
\item {\em If $b_5=b_6=0$, then the subsum is empty:}
This is because in the sequence
\[|a_1+a_2+a_3+a_4|,|a_7|,|a_8|,\ldots,|a_n|\]
each entry is more than $2$ times larger than the previous one; thus the only way of obtaining a zero subsum in this case is when ${b_k=0}$ for all ${1\leq k \leq n}$.

\item {\em If $b_5=b_6=1$, then $b_k=1$ for all ${1\leq k \leq n}$:}
Assume that for some choice of $(b_k)_{1\leq k \leq n}$ with ${b_5=b_6=1}$ we have ${\sum_{k=1}^n b_k \cdot a_k = 0}$. Since we also have ${\sum_{k=1}^n a_k = 0}$ it follows that
\[\begin{array}{c@{\;}l}
	&\sum_{k=1}^n a_k - \sum_{k=1}^n b_k \cdot a_k \\[0.5em]
	=& (1-b_1) \cdot (a_1+a_2+a_3+a_4)+\sum_{k=7}^n (1-b_k) \cdot a_k\\[0.5em]
	=& 0,
\end{array}
\]
where $1-b_k \in \{0,1,2\}$ for $k \in \{1,7,8,\ldots, n\}$. For the same reason as in the previous item, the only choice of $(1-b_k)_{k \in \{1,7,8,\ldots, n\}}$ that makes this equality true is $1-b_k =0 $ (thus $b_k =1 $) for all $k \in \{1,7,8,\ldots, n\}$.

\item {\em If $b_5=b_6=-1$, then $b_k=-1$ for all $1\leq k \leq n$,} by a symmetric argument.
\end{itemize}

\noindent
Thus condition (iii) holds, completing the proof.
\end{proof}

\section{\rr{Closing} remarks}

\noindent
In the preceding sections we established new lower bounds for strong variants of the $n$-conjecture. In that context, we always exclusively considered $n$-tuples of pairwise coprime integers. To conclude the article, we make some closing remarks about instances that are {\em not} necessarily pairwise coprime.

\begin{rem}\label{rem:halb}
	If we were to allow common factors in $n$-tuples, we could for example consider the set of quadruples of the form
	\[{((2^h+1)^3,-2^{3h},-3 \cdot 2^h \cdot(2^h+1),-1)}\]
	for ${h \in \mathbb N}$. Note that we still have ${\gcd(a_1, a_2, a_3, a_4) = 1}$, but that arbitrarily large common divisors occur between pairs of these numbers; for instance, both~$a_2$ and~$a_3$ are divisible by 
	$2^h$.  It is not too difficult to see that these quadruples belong to the set $A(4)$ from Conjecture~\ref{conj:bb}. The limit superior of the qualities of these quadruples is~$3$; that is, under these relaxed conditions, it is possible to achieve considerably larger qualities than in the preceeding sections. 
	
	This is in accordance with Conjecture~\ref{conj:bb} and the previously known result of Browkin and Brzezi\'nski~\cite[Theorem~1]{BB94} that ${Q_{A(n)} \ge 2n -5}$ for every ${n \ge 3}$. 	The proof of this fact starts from a geometric sum equation 
	\[ \sum_{i=0}^{k-3} y^i = \frac{y^{k-2} - 1}{y-1}.\] 
	Multiplying both sides of the equation by ${x := y-1}$ we obtain 
	\[ y^{k-2} - xy^{k-3} - xy^{k-4} - \ldots - x - 1 = 0. \] 
	It is easy to see that conditions~(i) and (iii) from Conjecture~\ref{conj:bb} are satisfied. For a clever choice of $k$ and ${x = y-1}$, Browkin and Brzezi\'nski were able to obtain a sequence of $n$-tuples summing to zero such that each single \br{$n$-tuple} satisfies the subsum condition and such that the sequence of corresponding qualities has an accumulation point ${\geq 2n-5}$. 
\end{rem}

\noindent
The previous comments concern the case where we allow unbounded common divisors between the elements of the solution $n$-tuples. This can be thought of as the opposite extreme of the situation studied in the main parts of this article where we only considered $n$-tuples whose entries were required to be pairwise coprime. In between these two extremes, we could also study a case where finitely many, that is bounded, common divisors are permitted. We conclude the article by giving an example of an intermediate result that can be obtained for this setting.

\begin{lem}
There is a finite set $E$ such that there exists a sequence $(a^{(h)})$ of quintuples $(a_1, a_2,a_3,a_4,a_5)$ of integers such that 
\begin{enumerate}[(i)]
	\item ${\gcd(a_i,a_j) \in E}$ for any ${1 \le i < j \le 5}$; 
	\item $a_1+\ldots+a_5=0$;
	\item there are no $b_1,\ldots,b_5 \in \{-1,0,1\}$ and $i,j$ with
	$1\leq i,j\leq 5$ such that $b_i=0$ and $b_j=1$ and
	$\sum_{k=1}^5 b_k \cdot a_k = 0$; 
	\item ${\gcd(a_1, \ldots, a_5) = 1}$; and 
	\item ${\limsup_{h \rightarrow \infty} q(a^{(h)}) \geq \nicefrac95}$.
\end{enumerate}
\end{lem}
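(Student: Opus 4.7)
The plan is to adapt the construction in the proof of Theorem~\ref{th:first} to the relaxed coprimality setting, and in particular to push the polynomial identity there from degree $5$ to degree $9$. Specifically, I would start from the identity
\[
   (x-1)^9 - (x+1)^9 + 2(3x^2+1)^4 - 48\,x^2\,(3x^2+1)(x^2-1)(x^2+1) \;=\; 0,
\]
which can be verified by combining $(x+1)^9 - (x-1)^9 = 2(9x^8 + 84x^6 + 126x^4 + 36x^2 + 1)$ with the algebraic identity $(3x^2+1)^4 - (9x^8 + 84x^6 + 126x^4 + 36x^2 + 1) = 24x^2(3x^2+1)(x^2-1)(x^2+1)$. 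This is a four-term identity; to produce the required five summands I would split the last term as $a_4 + a_5$ via $a_4 = -48x^2(3x^2+1)(x^2-1)(x^2+1) + c$ and $a_5 = -c$ for a fixed small nonzero constant $c$, chosen so that $\gcd(a_4, a_5)$ divides $c$ and so that no unintended small-prime coincidence creates a zero subsum.

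Next, to reduce the radical of the product $a_1 \cdots a_5$ sufficiently, the sequence of $x$'s would be drawn from the simultaneous solutions of two Pell-type equations: the classical Pell equation $y^2 - 3x^2 = 1$, so that $\rad(3x^2+1) = \rad(y^2)$ becomes of linear order in $x$; and the equation $x^2 + 1 = (v^2+1)t^2$ for a suitably chosen fixed $v$, so that $\rad(x^2+1)$ also becomes of linear order in $x$ (the same Pell trick as in the proof of Theorem~\ref{th:first}). When both reductions are in force, the distinct polynomial factors contributing to the radical of the product are $(x-1)$, $(x+1)$, $\sqrt{3x^2+1}$, $t$, and $x$, each of linear order, so that $\rad(a_1 \cdots a_5) \sim x^5$ while $\max |a_i| = |(x+1)^9| \sim x^9$, yielding quality $\to 9/5$.

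Finally, I would verify the lemma's four conditions: (ii) and (iv) are built in, (i) follows from the boundedness of $c$ together with the fact that the polynomial factors $(x-1)$, $(x+1)$, $(3x^2+1)$, $(x^2+1)$, $x$ are pairwise coprime as polynomials (so their pairwise gcds at integer values lie in a fixed finite set depending only on $x$ modulo a fixed modulus), and (iii) follows from standard magnitude comparisons analogous to those in the proofs of Theorems~\ref{th:first} and~\ref{th:third}: any proper subsum involving $a_1$ or $a_2$ is dominated by $x^9$-order terms that cannot be cancelled by the much smaller $a_3, a_4, a_5$. The main obstacle will be arranging the simultaneous Pell-type constraints in the second paragraph: the intersection of two Pell families in general contains only finitely many common $x$, so one must either choose $v$ cleverly so that its Pell family aligns with that of $y^2 - 3x^2 = 1$, or replace the identity above by an alternative degree-$9$ identity requiring only a single Pell reduction (and compensate by a more elaborate splitting) so as still to obtain radical degree $\le 5$.
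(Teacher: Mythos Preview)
Your plan has two genuine gaps, and the second one is fatal as stated.

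First, the simultaneous Pell reduction: you ask for infinitely many $x$ lying in the solution set of $y^2-3x^2=1$ \emph{and} in that of $x^2+1=(v^2+1)t^2$. You correctly flag this as the ``main obstacle'', but do not resolve it; generically such an intersection corresponds to integral points on a curve of genus $\geq 1$ and is finite. Your proposed fallback (``an alternative degree-$9$ identity requiring only a single Pell reduction'') is exactly what is needed, but you do not supply one.

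Second, and more seriously: your split $a_4 = -48x^2(3x^2+1)(x^2-1)(x^2+1)+c$, $a_5=-c$ destroys the factorisation of the degree-$8$ term. The number $a_4$ is then the value of a (generically irreducible) degree-$8$ polynomial, and there is no reason for $\rad(a_4)$ to be any smaller than $|a_4|\sim x^8$. Your radical estimate ``$\rad(a_1\cdots a_5)\sim x^5$'' implicitly treats $a_4$ as if it were still the product $-48x^2(3x^2+1)(x^2-1)(x^2+1)$, which it is not once $c$ is added. With $\rad(a_4)$ uncontrolled, the quality bound collapses.

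The paper avoids both problems at once. It chooses $x=\ell^{h}-1$ for a fixed odd prime $\ell$, so that $x+1=\ell^{h}$ has \emph{constant} radical $\ell$; this is a single, elementary radical reduction replacing any Pell machinery. It then uses a different degree-$9$ identity,
\[
189(x+1)^9-189(x-1)^9-42(3x^2+7)^4+16(63x^2+79)^2+608=0,
\]
whose crucial feature is that after peeling off the constant $a_5=608$, the remaining degree-$4$ term $a_4=16(63x^2+79)^2$ is a \emph{perfect square}, so $\rad(a_4)$ is governed by the degree-$2$ factor $63x^2+79$. The radical of the whole product is then bounded by a degree-$5$ polynomial (from $(x-1)$, $(3x^2+7)$, $(63x^2+79)$, and constants), while $\max|a_i|\sim x^9$, giving quality $\to 9/5$. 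The lesson is that the split into five summands must be designed so that \emph{every} summand retains a controlled factorisation; an additive constant shift of a high-degree factor does not achieve this.
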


\begin{proof}
Let $x$ be ${\ell^h-1}$ for some $h\in \mathbb{N}$ and some fixed odd prime number $\ell$. Fix
\[a_1 = 189(x+1)^9, \quad
a_2 = -189(x-1)^9,\quad
a_3 = -42(3x^2+7)^4,
\]\[
a_4 = 16(63x^2+79)^2,\quad
a_5 = 608.
\]
The greatest common
divisor of $a_1$ and $a_2$ is $189$. 

We claim that $\gcd({a_1 \cdot a_2},a_3)$ divides $1890$. 
To see this, first note that on the one hand, the least common multiple of $189$ and $42$ is $378$. On the other hand, if a prime $p$ divides both ${x^2-1}$ and ${3x^2 + 7}$, then~${x^2 \equiv 1 \!\pmod{p}}$ and therefore ${3x^2 + 7 \equiv 10\!\pmod{p}}$; thus, since $p$~divides ${3x^2 + 7}$ by assumption, we may conclude that $p$ is a factor of~$10$. Note that 1890 is the least common multiple of $378$ and $10$. 

In an analogous way, we can argue that  $\gcd({a_1 \cdot a_2},a_4)$ is
a factor of $214704$ and that $\gcd(a_3, a_4)$ is a factor of $5712$.
The greatest common divisor of $a_5$ and any $a_i$ with ${i \ne 5}$ is a factor of $608$. Then, letting 
$$E = \{r\colon r \textup{ divides one of }
608, 1890, 5712, 214704\},$$ 
we have that all common divisors of the entries of $(a_1,\ldots,a_5)$ are contained in $E$. We also note that ${\gcd(a_1, \ldots, a_5)}$ divides ${\gcd(\gcd(a_1, a_2), a_5) = 1}$. 

An easy calculation shows that (ii) is satisfied. To see that condition~(iii) holds, argue as in the proof of Theorem~\ref{satz_polynom_begr}.

By definition, $x+1$ is a power of~$\ell$. Thus, ${\rad(a_1 \cdot a_2 \cdot a_3 \cdot a_4 \cdot a_5)}$ is a factor of 
\[189 \cdot 42 \cdot 16 \cdot 608 \cdot \ell \cdot
(x-1) \cdot (3x^2+7) \cdot (63x^2+79).\]
As this is a polynomial of  degree $5$, whereas $a_1$ is a polynomial of degree $9$, we conclude that ${\limsup_{h \rightarrow \infty} q(a^{(h)}) \geq \nicefrac95}$.
\end{proof}

\subsubsection*{Acknowledgements.} The authors would like to
thank Benne de Weger for helpful correspondence and for making his unpublished
notes~\cite{Weg20} available to them. \rr{Moreover we would like to thank the anonymous referee for the fast and very thorough proof-reading of our manuscript, as well as for suggesting important improvements.}

\end{document}